\newtheorem{theorem}[subsection]{Theorem}
\newtheorem{proposition}[subsection]{Proposition}
\newtheorem{lemma}[subsection]{Lemma}
\newtheorem{corollary}[subsection]{Corollary}
\theoremstyle{definition}
\newtheorem{example}[subsection]{Example}
\newtheorem{question}[subsection]{Question}
\theoremstyle{remark}
\newtheorem{remark}[subsection]{Remark}
\numberwithin{equation}{subsection}
\newcommand{\ann}{\operatorname{ann}}
\newcommand{\Ass}{\operatorname{Ass}}
\newcommand{\bbZ}{\mathbb{Z}}
\newcommand{\Coker}{\operatorname{Coker}}
\newcommand{\ddual}[1]{{#1}^{\dagger}}
\newcommand{\dcat}[1]{\operatorname{D}(#1)}
\newcommand{\dco}[2]{\operatorname{L}\!\varLambda^{#1}(#2)}
\newcommand{\depth}{\operatorname{depth}}
\renewcommand{\dim}{\operatorname{dim}}
\newcommand{\End}{\operatorname{End}}
\newcommand{\Ext}[4]{\operatorname{Ext}^{#1}_{#2}(#3,#4)}
\newcommand{\fm}{\mathfrak{m}}
\newcommand{\fp}{\mathfrak{p}}
\newcommand{\ges}{\geqslant}
\newcommand{\hh}[1]{\operatorname{H}(#1)}
\newcommand{\HH}[2]{\operatorname{H}_{#1}(#2)}
\newcommand{\CH}[2]{\operatorname{H}^{#1}(#2)}
\newcommand{\height}{\operatorname{height}}
\newcommand{\Hom}[3]{\operatorname{Hom}_{#1}(#2,#3)}
\newcommand{\kos}[2]{K(#1;#2)}
\newcommand{\lam}[2]{\varLambda^{#1}{#2}}
\newcommand{\length}{\operatorname{length}}
\newcommand{\lra}{\longrightarrow}
\newcommand{\lotimes}[1]{\otimes_{#1}^{\operatorname{L}}}
\newcommand{\lch}[3]{\operatorname{H}^{#1}_{#2}(#3)}
\newcommand{\mcO}{\mathcal{O}}
\newcommand{\pdim}{\operatorname{proj\,dim}}
\newcommand{\rder}[1]{\operatorname{R}\!{#1}_*}
\newcommand{\Rhom}[3]{\operatorname{RHom}_{#1}(#2,#3)}
\newcommand{\rgam}[2]{\operatorname{R}\!\varGamma_{\!#1}(#2)}
\newcommand{\shift}{\mathsf{\Sigma}}
\newcommand{\spec}{\operatorname{Spec}}
\newcommand{\syzk}{\Omega}
\newcommand{\tors}[2]{\varGamma_{#1}{#2}}
\newcommand{\ul}{\underline}
\newcommand{\xra}{\xrightarrow}
\title[MCM complexes]{Maximal Cohen-Macaulay complexes and \\ their uses: A partial survey}
\author[Iyengar]{Srikanth B. Iyengar}
\address{Department of Mathematics, University of Utah, Salt Lake City, UT 84112, USA}
\email{iyengar@math.utah.edu}
\author[Ma]{Linquan Ma}
\address{Department of Mathematics, Purdue University, 150 N. University street, IN 47907}
\email{ma326@purdue.edu}
\author[Schwede]{Karl Schwede}
\address{Department of Mathematics, University of Utah, Salt Lake City, UT 84112, USA}
\email{schwede@math.utah.edu}
\author[Walker]{Mark E. Walker}
\address{Department of Mathematics, University of Nebraska, Lincoln, NE 68588, U.S.A.}
\email{mark.walker@unl.edu}
\begin{document}

\begin{abstract}
This work introduces a notion of complexes of maximal depth, and maximal Cohen-Macaulay complexes,  over a commutative noetherian local ring. The existence of such complexes is closely tied to the Hochster's ``homological conjectures", most of which were recently settled by Andr\'e. Various constructions of maximal Cohen-Macaulay complexes are described, and their existence is applied to give new proofs of some of the homological conjectures, and also of certain results in birational geometry.
\end{abstract}

\date{15th June 2021}

\keywords{homological conjectures,  maximal Cohen-Macaulay complex, multiplier ideal, resolution of singularities}

\subjclass[2020]{13D02 (primary); 13D22, 13D45, 14E15, 14F18  (secondary)}

\thanks{Part of this work was partly supported by  National Science Foundation DMS grants 2001368 (SBI);  1901672 and FRG grant 1952366 (LM),  1801849 and FRG grant 1952522 (KS), and 1901848 (MW). LM was supported by a fellowship from the Sloan Foundation.  KS was supported by a fellowship from the Simons foundation.}

\maketitle

\section*{Introduction}
Big Cohen-Macaulay modules over (commutative, noetherian) local rings  were introduced by Hochster around fifty years ago and  their relevance to local algebra is established beyond doubt. Indeed, they play a prominent role in Hochster's lecture notes~\cite{Hochster:1975}, where he describes a number of homological conjectures that can be proved using big Cohen-Macaulay modules, and their finitely generated counterparts, the maximal Cohen-Macaulay modules; the latter are sometimes called, as in \emph{loc.~cit.}, ``small" Cohen-Macaulay modules.  Hochster~\cite{Hochster:1973, Hochster:1975} proved that big Cohen-Macaulay modules exist when the local ring contains a field; and conjectured that even rings of mixed-characteristic possess such modules.  This conjecture was proved by Andr\'e~\cite{Andre:2018}, thereby settling a number of the homological conjectures.  In fact, by results of Hochster and Huneke \cite{Hochster/Huneke:1992, Hochster/Huneke:1995}, and Andr\'e~\cite{Andre:2018} there exist even big Cohen-Macaulay \emph{algebras} over any local ring.  The reader will find a survey of these developments in \cite{Hochster:2017, Ma/Schwede:2019}.

In this work we introduce three versions of the Cohen-Macaulay property that apply also to complexes of modules, discuss various constructions that give rise to them, and  present some consequences that follow from their existence.  In fact such complexes have come up earlier, in the work of Roberts~\cite{Roberts:1980, Roberts:1980a},  recalled in \S\ref{ss:resolve-singularities}, and in recent work of Bhatt~\cite{Bhatt:2020}, though only in passing.  What we found is that results that were proved using big Cohen-Macaulay modules can often be proved using one of their complex variants. This assertion is backed up the material presented in Sections~\ref{se:max-depth} and \ref{se:birational}. Moreover, as will be apparent in the discussion in Section~\ref{se:mcm}, the complex versions are easier to construct, and with better finiteness properties. It thus seems worthwhile to shine an independent light on them. Let us begin by defining them.

We say that a complex $M$ over a local ring $R$ with maximal ideal $\fm$ has \emph{maximal depth} if $\depth_RM= \dim R$, where depth is as in \S\ref{ss:depth}; we ask also that $\hh M$ be bounded and  the canonical map $\HH 0M\to \HH 0{k\lotimes RM}$ be non-zero. Any complex that satisfies the last condition has depth at most $\dim R$, whence the name ``maximal depth".  An $R$-module has maximal depth precisely when it is big Cohen-Macaulay. The depth of a complex can be computed in terms of its local cohomology modules, $\lch i{\fm}M$, with support on $\fm$. Thus $\depth_RM=\dim R$ means that $\lch i{\fm}M$ is zero for $i<\dim R$, and nonzero for $i=\dim R$.  A complex of maximal depth is \emph{big Cohen-Macaulay} if $\lch i{\fm}M=0$ for $i>\dim R$ as well. When in addition the $R$-module $\hh M$ is finitely generated, $M$ is \emph{maximal
  Cohen-Macaulay} (MCM). Thus an MCM module is what we know it to be. These notions are discussed in detail in Section~\ref{se:max-depth} and \ref{se:mcm}.

When $R$ is an excellent local domain with residue field of positive characteristic, $R^{+}$, its integral closure  in an algebraic closure of its field of fractions, is big Cohen-Macaulay. This was proved by Hochster and Huneke~\cite{Hochster/Huneke:1992}, see also  Huneke and Lyubeznik~\cite{Huneke/Lyubeznik:2007}, when $R$ itself contains a field of positive characteristic.  When $R$ has mixed characteristic this is a recent result of Bhatt~\cite{Bhatt:2020}. Thus for such rings there is a canonical construction of a big Cohen-Macaulay module, even an \emph{algebra}.  See also the work of Andr\'e~\cite{Andre:2020} and Gabber~\cite{Gabber:2018} concerning functorial construction of big Cohen-Macaulay algebras; see also \cite[Appendix A]{Ma/Schwede/Tucker/Waldron/Witaszek:2019}). On the other hand, $R^{+}$ is never big Cohen-Macaulay when $R$ contains the rationals and is a normal domain of Krull dimension at least $3$, by a stadard trace argument. As far as we know,  in this context there are no such ``simple" models of big Cohen-Macaulay modules, let alone algebras. See however Schoutens' work~\cite{Schoutens:2004}.

When $R$ is essentially of finite type over a field of characteristic zero, the derived push-forward of the structure sheaf of a resolution of singularities of $\spec R$ is an MCM complex~\cite{Roberts:1980}. What is more, this complex is equivalent to a graded-commutative differential graded algebra, as we explain in \ref{ss:resolve-singularities}. This is noteworthy because when such a ring $R$ is also a normal domain of dimension $\ge 3$ it cannot have any MCM algebras, by the same trace argument as for $R^{+}$. For any local ring $R$ with a dualizing complex, there are simple constructions of MCM complexes---see Corollaries \ref{co:MCM=CEC} and \ref{co:omega}---though we do not know any that are also differential graded algebras.  In \cite{Bhatt:2014} Bhatt gives examples of complete local rings, containing a field of positive characteristic, that do not have any MCM algebras.

As to applications, in Section~\ref{se:max-depth}  we prove the New Intersection Theorem and its improved version using complexes of maximal depth, extending the ideas from \cite{Iyengar:1999} where they are proved using big Cohen-Macaulay modules.  Hochster proved in \cite{Hochster:1983} that the Improved New Intersection Theorem is equivalent to the Canonical Element Theorem.  In Section~\ref{se:mcm} we use results from \emph{loc.~cit.} to prove that for local rings with dualizing complexes the Canonical Element Theorem implies the existence of MCM complexes. An interesting point emerges: replacing ``module" with ``complex" puts the existence of big Cohen-Macaulay modules on par with the rest of the homological conjectures.

In Section~\ref{se:birational} we paraphrase Boutot's proof of his theorem on rational singularities to highlight the role of MCM complexes. We also give a new proof of a subadditivity property for  multiplier ideals. On the other hand, there are applications of MCM modules that do require working with modules; see \ref{re:mcm-module}. Nevertheless, it is clear to us that big Cohen-Macaulay complexes and MCM complexes have their uses, hence this survey.

\section{Local cohomology and derived completions}
\label{se:lch}
In this section, we recall basic definitions and results on local cohomology and derived completions. Throughout $R$ will be a commutative noetherian ring. By an \emph{$R$-complex} we mean a complex of $R$-modules; the grading will be upper or lower, depending on the context. In case of ambiguity, we indicate the grading; for example, given an $R$-complex $M$, the supremum of $\hh M$ depends on whether the grading is upper or lower. So we write $\sup\HH{*}M$ for the largest integer $i$ such that $\HH iM\ne 0$, and $\sup\CH{*}M$ for the corresponding integer for the upper grading.

We write $\dcat R$ for the (full) derived category of $R$ viewed as a triangulated category with translation $\shift$, the usual suspension functor on complexes. We take \cite{Lipman:1999, Dwyer/Greenlees:2002} as basic references, augmented by \cite{Avramov/Foxby:1991, Roberts:1980}, except that we use the term ``semi-injective" in place of ``q-injective" as in \cite{Lipman:1999}, and ``DG-injective", as in \cite{Avramov/Foxby:1991}. Similarly for the projective and flat analogs.

\subsection{Derived $I$-torsion}
Let $I$ an ideal in $R$.  The \emph{$I$-power torsion} subcomplex of an $R$-complex $M$ is
\[
\tors IM\colonequals \{m\in M\mid I^nm=0\text{ for some $n\ge 0$}\}.
\]
By $m\in M$ we mean that $m$ is in $M_i$ for some $i$. The corresponding derived functor is denoted $\rgam IM$; thus $\rgam IM = \tors I{J}$ where $M\xra{\sim} J$ is any semi-injective resolution of $M$. In fact, one can compute these derived functors from any complex of injective $R$-modules quasi-isomorphic to $M$; see \cite[\S3.5]{Lipman:1999}. By construction there is a natural morphism $\rgam IM \to M$ in the $\dcat R$.
The $R$-modules
\[
\lch iIM\colonequals \CH i{\rgam IM} \quad\text{for $i\in\bbZ$}
\]
are the local cohomology modules of $M$, supported on $I$. Evidently, these modules are $I$-power torsion. Conversely, when the $R$-module $\hh M$ is $I$-power torsion, the natural map $\rgam IM\to M$ is an isomorphism in $\dcat R$; see \cite[Proposition~6.12]{Dwyer/Greenlees:2002}, or \cite[Corollary~3.2.1]{Lipman:1999}.

In what follows we will use the fact that the class of $I$-power torsion complexes form a localizing subcategory of $\dcat R$; see \cite[\S6]{Dwyer/Greenlees:2002}, or \cite[\S3.5]{Lipman:1999}. This has the consequence that these complexes are stable under various constructions. For example, this class of complexes is closed under $L\lotimes R(-)$ for any $L$ in $\dcat R$. Thus, for any $R$-complexes $L$ and $M$ the natural map
\begin{equation}
\label{eq:lch-tensor}
\rgam I{L\lotimes RM} \lra L\lotimes R \rgam IM
\end{equation}
is a quasi-isomorphism.

\subsection{Derived $I$-completion}
The $I$-adic completion of an $R$-complex $M$ with respect to the ideal $I$, denoted $\lam IM$, is
\[
\lam IM \colonequals \lim_{n\ges 0} M/I^nM\,.
\]
This complex is thus the limit of the system
\[
\cdots \lra M/I^{n+1}M\lra M/I^nM\lra \cdots \lra M/IM\,.
\]
The canonical surjections $M\to M/I^nM$ induce an $R$-linear map $M\to \lam IM$. If this is an isomorphism we say that $M$ is \emph{$I$-adically complete}, or just \emph{$I$-complete}, though we reserve this name mainly for modules. The \emph{left-derived completion} with respect to $I$ of an $R$-complex $M$ is the $R$-complex
\[
\dco IM\colonequals \lam IP \quad \text{where $P\simeq M$ is a semi-projective resolution.}
\]
This complex is well-defined in $\dcat R$, and there is a natural morphism
\[
M\lra \dco IM\,.
\]
We say $M$ is \emph{derived $I$-complete} if this map is a quasi-isomorphism; equivalently if each $\HH iX$ is derived $I$-complete; see \cite[Proposition~6.15]{Dwyer/Greenlees:2002}, or \cite[Tag091N]{Stacks}

The derived $I$-complete modules from a colocalizing subcategory of $\dcat R$, and this means that for $N$ in $\dcat R$ the  natural map
\[
\dco I{\Rhom RNM}  \lra \Rhom RN{\dco IM}
\]
is a quasi-isomorphism. In particular, when $F$ is a perfect complex, we have an isomorphim in $\dcat R$
\begin{equation}
\label{eq:dco-tensor}
F\lotimes R\dco IM \simeq \dco I{F\lotimes RM}.
\end{equation}
These isomorphisms will be useful in what follows. It is a fundamental fact, proved by Greenlees and May~\cite{Greenlees/May:1992a}, see also
\cite[Proposition~4.3]{Dwyer/Greenlees:2002} or \cite[\S4]{Lipman:1999}, that derived local cohomology and derived completions  are adjoint functors:
\begin{equation}
\label{eq:GM}
\Rhom R{\rgam IM}N \simeq \Rhom R M{\dco IN}\,.
\end{equation}
One can take this as a starting point for defining derived completions, which works better in the non-noetherian settings; see \cite{Stacks}. This adjunction implies that the natural maps are quasi-isomorphisms:
\begin{equation}
\label{eq:GM2}
\dco I{\rgam IM} \xra{\ \simeq \ } \dco IM \quad\text{and}\quad  \rgam IM \xra{\ \simeq \ }  \rgam I{\dco IM}\,.
\end{equation}

The result below, due to A.-M.~Simon~\cite[1.4]{Simon:1990},  is a version of Nakayama's Lemma for cohomology of complete modules.
It is clear from the proof that we only need $X$ to be derived $I$-complete; see \cite[Tag09b9]{Stacks}.

\begin{lemma}
\label{le:complete-nak}
For any $R$-complex $X$ consisting of $I$-complete modules, and integer $i$, if $I\HH iX= \HH iX$, then $\HH iX=0$.
\end{lemma}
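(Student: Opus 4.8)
The plan is to translate the statement into one about cycles and boundaries and then to run a successive-approximation argument that uses completeness in a single degree. Write $\dd$ for the differential of $X$ (indexed homologically, so $\dd\colon X_{i+1}\to X_i\to X_{i-1}$), and let $Z$ and $B$ be the submodules of $X_i$ consisting of the cycles and the boundaries, respectively, so that $\HH iX=Z/B$. The hypothesis $I\HH iX=\HH iX$ is exactly the equality $Z=IZ+B$ of submodules of $X_i$. It therefore suffices to show that every cycle is a boundary.

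Fix $z\in Z$. I would construct a sequence $y_0\colonequals 0,\,y_1,\,y_2,\dots$ of elements of $X_{i+1}$ such that, for every $n\ge 0$,
\[
z-\dd(y_n)\in I^nZ
\qquad\text{and}\qquad
y_{n+1}-y_n\in I^nX_{i+1}.
\]
The recursion is the only place the hypothesis enters: given $y_n$ with $z-\dd(y_n)\in I^nZ$, write this element as a finite sum $\sum_j r_jw_j$ with $r_j\in I^n$ and $w_j\in Z$, use $Z=IZ+B$ to write each $w_j=w_j'+\dd(u_j)$ with $w_j'\in IZ$ and $u_j\in X_{i+1}$, and put $y_{n+1}\colonequals y_n+\sum_j r_ju_j$. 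Then $z-\dd(y_{n+1})=\sum_j r_jw_j'\in I^{n+1}Z$ and $y_{n+1}-y_n\in I^nX_{i+1}$, which continues the construction; the base case $n=0$ is trivial since $I^0Z=Z$.

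For $m\ge n$ the telescoping sum gives $y_m-y_n\in I^nX_{i+1}$, so the $y_n$ form a Cauchy sequence for the $I$-adic topology on $X_{i+1}$. Because $X_{i+1}$ is $I$-adically complete, i.e.\ $X_{i+1}=\varprojlim_n X_{i+1}/I^nX_{i+1}$, the compatible residue classes of the $y_n$ determine an element $y\in X_{i+1}$ with $y-y_n\in I^nX_{i+1}$ for all $n$. Since $\dd$ is $R$-linear it sends $I^nX_{i+1}$ into $I^nX_i$, whence
\[
z-\dd(y)=\bigl(z-\dd(y_n)\bigr)-\dd(y-y_n)\in I^nX_i\qquad\text{for every }n .
\]
As $X_i$ is $I$-adically complete it is $I$-separated, so $\bigcap_n I^nX_i=0$ and therefore $z=\dd(y)\in B$. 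This proves $Z=B$, i.e.\ $\HH iX=0$.

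The argument uses the completeness hypothesis only through two facts about individual terms of $X$: that the limit $y$ exists in $X_{i+1}$, and that $X_i$ is separated; this is the only real subtlety, and it is precisely what the hypothesis supplies, with everything else being formal. For the sharper version noted in the remark, where one assumes merely that $X$ is derived $I$-complete, I would instead observe that $\HH iX$ is then a derived $I$-complete module and invoke the module form of the statement (as in \cite[Tag09b9]{Stacks}), leaving the complete-module argument above unchanged.
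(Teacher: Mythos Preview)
Your argument is correct and follows essentially the same approach as the paper's: both run the standard complete-Nakayama successive-approximation argument. The paper packages it more tersely---observing that $Z_i$ is closed in the $I$-complete module $X_i$, hence itself $I$-complete, and then that the cokernel of a map between $I$-complete modules vanishes once its $I$-adic completion does---while your explicit construction of the sequence $(y_n)$ in $X_{i+1}$ and appeal to separatedness of $X_i$ is precisely what underlies that step.
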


\begin{proof}
The point is that $Z_i$, the module of cycle in degree $i$, is a closed submodule of the $I$-complete module $X_i$, and hence is also $I$-complete.  Moreover $\HH iX$ is the cokernel of the map $X_{i+1} \to Z_i$, and a map between $I$-complete modules is zero if and only if its $I$-adic completion is zero. This translates to the desired result.
\end{proof}

\subsection{Koszul complexes}
Given a sequence of elements $\ul r\colonequals r_1,\dots,r_n$ in the ring $R$, and an $R$-complex $M$, we write $\kos {\ul r}M$ for the Koszul complex on $\ul r$ with coefficients in $M$, namely
\[
\kos{\ul r}M \colonequals \kos{\ul r}R \otimes_RM\,.
\]
Its homology is denoted $\HH{*}{\ul r;M}$. For a single element $r\in R$, the complex $\kos rM$ can be constructed as the mapping cone of the homothety map $M\xra{r}M$. In particular, one has an exact sequence

\begin{equation}
\label{eq:mapping-cone}
0\lra M \lra \kos rM \lra \shift M \lra 0
\end{equation}
of $R$-complexes. The Koszul complex on a sequence can thus be constructed as an iterated mapping cone. From Lemma~\ref{le:complete-nak} one gets the result below. Recall that $\sup\HH{*}{-}$ denotes the supremum, in lower grading.

\begin{lemma}
\label{le:sup-Koszul}
Let $R$ be a noetherian ring and $X$ a derived $I$-complete $R$-complex. For any sequence $\ul r\colonequals r_1,\dots,r_n$ in $I$ one has
\[
\sup \HH{*}{\ul r; X}\ge \sup \HH{*}X\,.
\]
\end{lemma}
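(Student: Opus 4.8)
The plan is to reduce to the single-element case and induct on $n$, using the mapping-cone sequence \eqref{eq:mapping-cone} together with Lemma~\ref{le:complete-nak}. First I would record the base case: for a single $r\in I$, tensoring \eqref{eq:mapping-cone} with $X$ gives the exact triangle $X \to \kos rX \to \shift X \to$, hence a long exact sequence in homology
\[
\cdots \lra \HH{i}{X} \xra{\ r\ } \HH{i}{X} \lra \HH{i}{\ul r; X} \lra \HH{i-1}{X} \xra{\ r\ } \HH{i-1}{X} \lra \cdots
\]
In top degree $i = s := \sup\HH{*}{X}$ the term $\HH{s+1}{X}$ vanishes, so $\HH{s}{\ul r; X} \cong \HH{s}{X}/r\HH{s}{X}$. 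Now $X$ is derived $I$-complete, so each $\HH{s}{X}$ is a derived $I$-complete module; applying Lemma~\ref{le:complete-nak} (in the form noted in the remark preceding it, which only requires derived $I$-completeness) we get that $\HH{s}{X}/r\HH{s}{X} = 0$ would force $\HH{s}{X} = 0$, a contradiction. Hence $\HH{s}{\ul r; X} \neq 0$, giving $\sup\HH{*}{\ul r; X} \ge s$ when $n = 1$.

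For the inductive step, write $\ul r = r_1,\dots,r_n$ and $\ul r' = r_1,\dots,r_{n-1}$, so that $\kos{\ul r}{X} \simeq \kos{r_n}{\kos{\ul r'}{X}}$ as the iterated mapping-cone construction recalled in the text. The key point I need is that $\kos{\ul r'}{X}$ is again derived $I$-complete: indeed $\kos{\ul r'}{R}$ is a perfect complex, so by \eqref{eq:dco-tensor} we have $\kos{\ul r'}{R} \lotimes R \dco I X \simeq \dco I{\kos{\ul r'}{R}\lotimes R X}$, and since $X \xra{\sim} \dco IX$ this says $\kos{\ul r'}{X}$ is derived $I$-complete. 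By induction $\sup\HH{*}{\kos{\ul r'}{X}} \ge \sup\HH{*}{X} =: s$, and then applying the $n=1$ case to the single element $r_n \in I$ and the derived $I$-complete complex $\kos{\ul r'}{X}$ yields
\[
\sup\HH{*}{\ul r; X} \;=\; \sup\HH{*}{r_n; \kos{\ul r'}{X}} \;\ge\; \sup\HH{*}{\kos{\ul r'}{X}} \;\ge\; s,
\]
which is the claim.

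The only genuinely delicate point is the invocation of Lemma~\ref{le:complete-nak} for a complex whose homology modules are merely \emph{derived} $I$-complete rather than classically $I$-complete — but the remark immediately before that lemma, and the cited \cite[Tag09b9]{Stacks}, grant exactly this. Everything else is the standard long-exact-sequence bookkeeping for Koszul homology and the permanence of derived completeness under $-\lotimes R F$ for perfect $F$, recorded in \eqref{eq:dco-tensor}. I expect no further obstacle; the proof is short once the base case is set up.
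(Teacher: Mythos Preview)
Your proof is correct and follows essentially the same approach as the paper: reduce to $n=1$ by iterating the Koszul construction (using that derived $I$-completeness is preserved), then use the long exact sequence from \eqref{eq:mapping-cone} together with Nakayama for (derived) complete modules. The only cosmetic difference is that the paper first replaces $X$ by $\lam IP$ for a semi-projective resolution $P$, so as to work with a complex of honestly $I$-complete modules and apply Lemma~\ref{le:complete-nak} as stated, whereas you invoke the derived-complete version noted in the remark preceding that lemma; both are fine.
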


\begin{proof}
When $X$ is derived $I$-complete so is $\kos rX$ for any $r\in I$. It thus suffices to verify the desired claim for $n=1$. Replacing $X$ by $\lam IP$, where $P$ is a semi-projective resolution of $X$, we can assume $X$, and hence also $\kos rX$, consists of $I$-complete modules.  The desired inequality is then immediate from the standard long exact sequence in homology
\[
\cdots \lra \HH iX \xra{\ r\ } \HH iX \lra \HH i{r,X} \lra \HH {i-1}X\lra\cdots
\]
arising from the mapping cone sequence \ref{eq:mapping-cone} and Lemma~\ref{le:complete-nak}.
\end{proof}

To wrap up this section we recall the notion of depth for complexes.

\subsection{Depth}
\label{ss:depth}
The \emph{$I$-depth} of an $R$-complex $M$  is
\[
\depth_R(I,M)\colonequals \inf\{i\mid \lch iIM\ne 0\}.
\]
In particular, $\depth_R(I,M)=\infty$ if $\lch {}IM=0$. When the ring $R$ is local, with maximal ideal $\fm$, the \emph{depth} of $M$ refers to the $\fm$-depth of $M$.

Depth can also be computed using Ext and Koszul homology:
\[
\depth_R(I,M) = \inf\{i\mid \Ext iR{R/I}M\ne 0\}\,,
\]
and if a sequence $\ul r\colonequals r_1,\dots,r_n$ generates $I$, then
\[
\depth_R(I,M) = n -  \sup\{i\mid \HH i{\ul r; M}\ne 0\}
\]
This last equality can be expressed in terms of Koszul cohomology. All these results are from \cite{Foxby/Iyengar:2001}, though special cases (for example, when $M$ is an $R$-module) had been known for much longer.

\begin{remark}
\label{re:depth-properties}
Let $R$ be a commutative ring, $I$ an ideal in $R$, and $M$ an $R$-complex. Set $s=\sup\HH {*}M$.
\begin{enumerate}
\item
$\depth_R(I,M) \ge -s$ and equality holds if  $\tors I{(\HH sM)}\ne 0$.
\item
When $R$ is local and  $F$ is a finite free complex, one has
\[
\depth_R(F\otimes_RM) = \depth_R M - \pdim_RF
\]
\end{enumerate}
For part (1) see \cite[2.7]{Foxby/Iyengar:2001}. When $F$ is the resolution of a module and $M=R$, part (2) is nothing but the equality of Auslander and Buchsbaum.  For a proof in the general case see, for example, \cite[Theorem~2.4]{Foxby/Iyengar:2001}.
\end{remark}

\section{Complexes of maximal depth and the intersection theorems}
\label{se:max-depth}
In this section we introduce a notion of ``maximal depth" for
complexes  over local rings.  The gist of the results presented here
is that their existence implies the Improved New Intersection Theorem,
and hence a whole slew of ``homological conjectures", most of which
have been  recently settled by Andr\'e~\cite{Andre:2018}.

A \emph{module} of maximal depth is nothing but a big Cohen-Macaulay module and Hochster  proved, already in \cite{Hochster:1975}, that their existence implies the homological
conjectures mentioned above. On the other hand, the Canonical Element Conjecture, now theorem, implies that $R$ has a \emph{complex} of maximal depth, even one with finitely
generated homology. This will be one of the outcomes of the discussion in the next section; see Remark~\ref{re:MCM=CEC2}. No such conclusion can be drawn about big Cohen-Macaulay
modules.

\subsection{Complexes of maximal depth}
\label{ss:max-depth}
Throughout $(R,\fm,k)$ will be a local ring, with maximal ideal $\fm$ and residue field $k$.  We say that an $R$-complex $M$ has \emph{maximal depth} if the following conditions
hold:
\begin{enumerate}[\quad\rm(1)]
\item
$\hh M$ is bounded;
\item
$\HH 0M\to \HH 0{k\lotimes RM}$ is nonzero; and
\item
$\depth_R M= \dim R$.
\end{enumerate}
The nomenclature is based on that fact that $\depth_RM\le \dim R$ for any complex $M$ that satisfies condition (2) above.  This inequality follows from Lemma~\ref{le:cm-hh0} applied with $F\colonequals K$, the Koszul complex on a system of parameters for $R$. Condition (3) can be restated as
\begin{equation}
\label{eq:lch-cm}
\lch i{\fm}M = 0 \quad\text{for $i<\dim R$} \quad\text{and} \quad \lch {\dim R}{\fm}M \ne 0\,.
\end{equation}
Clearly when $M$ is a module it has maximal depth precisely when it is big Cohen-Macaulay; condition (2) says that $M\ne \fm M$. Note also that if a complex $M$ has maximal depth then so does $M\oplus \shift^{-n}N$ for any $R$-module $N$ and integer $n\ge \dim R$.

\begin{lemma}
\label{le:cm-hh0}
Let $M$ be an $R$-complex with the natural map $\HH 0M\to \HH 0{k\lotimes RM}$ nonzero. For any $R$-complex $F$ with $\HH iF=0$ for $i<0$, if $\HH 0F\otimes_R k$ is nonzero, then so is $\HH 0{F\lotimes RM}$.
\end{lemma}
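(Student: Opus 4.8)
The plan is to produce an explicit class in $\HH 0{F\lotimes RM}$ and certify that it is nonzero by pushing it forward to the residue field. The basic device is the natural ``multiplication'' map $\mu_{A,B}\colon\HH 0A\otimes_R\HH 0B\to\HH 0{A\lotimes RB}$, $[a]\otimes[b]\mapsto[a\otimes b]$, defined for any $R$-complexes $A$, $B$ via cycle representatives in semi-projective resolutions and natural in each variable. Since $\HH iF=0$ for $i<0$ we may take $F$ to be semi-projective with $F_i=0$ for $i<0$; then for any $R$-module $N$, right-exactness of $-\otimes_RN$ gives $\HH 0{F\lotimes RN}=\Coker(F_1\otimes_RN\to F_0\otimes_RN)=\Coker(F_1\to F_0)\otimes_RN=\HH 0F\otimes_RN$, and one checks the resulting isomorphism is $\mu_{F,N}$. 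In particular $\HH 0{F\lotimes Rk}=\HH 0F\otimes_Rk$, which is nonzero by hypothesis.

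First I would fix the data. The hypothesis on $M$ says the $R$-linear map $f\colon\HH 0M\to\HH 0{k\lotimes RM}$ induced by the canonical morphism $M\to k\lotimes RM$ is nonzero, so choose $m\in\HH 0M$ with $f(m)\ne 0$. Choose also $n\in\HH 0F$ whose image $\bar n$ in $\HH 0F\otimes_Rk$ is nonzero. Set $\eta\colonequals\mu_{F,M}(n\otimes m)\in\HH 0{F\lotimes RM}$; it remains to prove $\eta\ne 0$.

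Next I would apply $F\lotimes R(-)$ to $M\to k\lotimes RM$ and pass to $\HH 0{-}$ to obtain $\phi\colon\HH 0{F\lotimes RM}\to\HH 0{F\lotimes Rk\lotimes RM}$. Naturality of $\mu$ in the second variable gives $\phi(\eta)=\mu_{F,\,k\lotimes RM}(n\otimes f(m))$, so it suffices to show this class is nonzero. Now $k\lotimes RM$ is a complex of $k$-vector spaces, hence is isomorphic in $\dcat R$ to $\bigoplus_j\shift^j\HH j{k\lotimes RM}$; write $N\colonequals\HH 0{k\lotimes RM}$ and let $\iota\colon N\to k\lotimes RM$ be the (split) inclusion of the degree-zero summand, which satisfies $\HH 0{\iota}=\mathrm{id}_N$. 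Naturality of $\mu$ then expresses $\mu_{F,\,k\lotimes RM}$ as the composite of the isomorphism $\mu_{F,N}$ with the map $\HH 0{F\lotimes RN}\to\HH 0{F\lotimes Rk\lotimes RM}$ obtained by applying $F\lotimes R(-)$ to $\iota$; the latter is a split monomorphism because $\iota$ is split in $\dcat R$. Hence $\mu_{F,\,k\lotimes RM}$ is injective. Finally, since $N$ is a $k$-vector space, $\HH 0F\otimes_RN=(\HH 0F\otimes_Rk)\otimes_kN$, and under this identification $n\otimes f(m)$ is $\bar n\otimes f(m)$, which is nonzero because $\bar n$ and $f(m)$ are nonzero vectors over the field $k$. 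Therefore $\mu_{F,\,k\lotimes RM}(n\otimes f(m))\ne 0$, so $\phi(\eta)\ne 0$, so $\eta\ne 0$, and $\HH 0{F\lotimes RM}\ne 0$.

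The routine parts are the construction, naturality, and cycle-level description of $\mu$, and the identity $\HH 0{F\lotimes RN}\cong\HH 0F\otimes_RN$ for modules $N$, which holds because $F$ is connective. The one substantive step is the appeal to formality of complexes of $k$-vector spaces in order to split off the degree-zero part of $k\lotimes RM$: this is what rules out the a priori possibility that $\HH 0{F\lotimes RM}=0$ while negative-degree homology of $M$ alone is responsible for $\HH 0{F\lotimes Rk\lotimes RM}\ne 0$, and it is precisely why the argument must use the genuine degree-zero class provided by the hypothesis on $M$ rather than a mere nonvanishing assertion.
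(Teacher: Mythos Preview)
Your proof is correct and follows essentially the same strategy as the paper's: both pick a cycle in $M_0$ whose class survives in $\HH 0{k\lotimes RM}$, both exploit formality of the $k$-complex $k\lotimes RM$ to split off its degree-zero homology, and both then apply $F\lotimes R(-)$ and track a specific class. The only difference is packaging: the paper phrases the construction as a morphism $R\to M$ in $\dcat R$ and works with the resulting commutative squares (obtaining a retraction $k\lotimes RM\to k$), whereas you phrase it via the pairing $\mu$ and naturality (obtaining a split inclusion $N\hookrightarrow k\lotimes RM$); these are dual ways of using the same splitting.
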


\begin{proof}
We can assume $M$ is semi-projective, so the functor $-\lotimes RM$ is represented by $-\otimes_RM$. By hypothesis there exists a cycle, say $z$, in $M_0$ whose image in $k\otimes_RM=M/\fm M$ is not a boundary. Consider the morphism $R\to M$ of $R$-complexes, where $r\mapsto rz$.
Its composition $R\to M\to k\otimes_RM$ factors through the canonical surjection $R\to k$, yielding the commutative square
\[
\begin{tikzcd}
R \arrow[r] \arrow[d] & M \arrow[d] \\
k  \arrow[r,shift left=.5ex] & k\otimes_{R}M. \arrow[l, dashrightarrow,shift left=.5ex]
\end{tikzcd}
\]
The dotted arrow is a left-inverse in $\dcat R$ of the induced $k\to k\otimes_RM$. It exists because $k\to \hh{k\otimes_RM}$ is nonzero, by the choice of $z$, and the complex $k\otimes_RM$ is quasi-isomorphic to $\hh{k\otimes_RM}$ in $\dcat k$, and hence in $\dcat R$.
Applying $F\lotimes R-$ to the diagram above yields the commutative square in $\dcat R$ on the left:
\[
\begin{gathered}
\begin{tikzcd}
F \arrow[r] \arrow[d] & F\lotimes RM \arrow[d] \\
F\lotimes Rk  \arrow[r,shift left=.5ex] & F\lotimes R(k\otimes_RM) \arrow[l, dashrightarrow,shift left=.5ex]
\end{tikzcd}
\end{gathered}
\qquad
\begin{gathered}
\begin{tikzcd}
\HH 0F \arrow[r] \arrow[d]& \HH 0{F\lotimes RM} \arrow[d] \\
\HH 0{F\lotimes Rk}  \arrow[r,shift left=.5ex] & \HH 0{F\lotimes R(k\otimes_RM)} \arrow[l, dashrightarrow,shift left=.5ex]
\end{tikzcd}
\end{gathered}
\]
The commutative square on the right is obtained by applying $\HH 0{-}$ to the one on the left. In this square, the hypotheses on $F$ imply that the vertical map on the left is nonzero, so hence is its composition with the horizontal arrow. The commutativity of the square then yields that $\HH 0{F\lotimes RM}$ is nonzero.
\end{proof}

The following result is due to Hochster and Huneke for rings containing a field, and due to Andr\'e in the mixed characteristic case.

\begin{theorem}[Andr\'e~\cite{Andre:2018}, Hochster\,\&\, Huneke~\cite{Hochster/Huneke:1992, Hochster/Huneke:1995}]
\label{th:big}
Each noetherian local ring possesses a big Cohen-Macaulay algebra. \qed
\end{theorem}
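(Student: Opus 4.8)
The plan is to follow the established route, reducing first to a complete local ring and then treating the three characteristics separately. It suffices to treat a complete local ring $R$: a system of parameters for $R$ is also one for $\widehat R$, and a big Cohen-Macaulay $\widehat R$-algebra is, by restriction of scalars along the faithfully flat map $R\to\widehat R$, a big Cohen-Macaulay $R$-algebra. By Cohen's structure theorem such an $R$ is a quotient of a formal power series ring over a field or over a complete DVR, which separates the equal-characteristic-$p$, equal-characteristic-zero, and mixed-characteristic cases.

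In equal characteristic $p>0$ one first kills a minimal prime of maximal dimension to reduce to a complete local domain $R$, and then takes the big Cohen-Macaulay algebra to be $R^{+}$, the integral closure of $R$ in an algebraic closure of its fraction field, following Hochster and Huneke. The engine is the Frobenius: given a partial system of parameters $x_1,\dots,x_n$ and a relation with coefficient $x_{n+1}$, one extracts $p$-power roots inside $R^{+}$ to absorb the relation, so that after a module-finite extension the sequence becomes regular; passing to the colimit over all finite extensions shows that every system of parameters is a regular sequence on $R^{+}$. The colon-capturing property from tight closure theory is what makes the absorption possible.

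In equal characteristic zero one reduces modulo $p$. After spreading out, a complete local ring $R$ of dimension $d$ containing $\bbQ$ arises by base change from a family $R_A$ of finite type over a finitely generated $\bbZ$-subalgebra $A$, and for $\fp$ in a Zariski-dense set of primes of $A$ the fibre $R_A\otimes_A\kappa(\fp)$ is local of dimension $d$ and characteristic $p$, hence has a big Cohen-Macaulay algebra by the previous step. These are then assembled into a big Cohen-Macaulay algebra for $R$, either through an ultraproduct (Schoutens) or via Hochster and Huneke's original descent; the bookkeeping with generic flatness and constructibility is the fiddly part here.

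The mixed-characteristic case is the hard one and is exactly André's theorem. Frobenius is unavailable and $R^{+}$ does not directly work, so one passes to perfectoid rings: over a suitable $p$-adically complete perfectoid base one builds, using the perfectoid Abhyankar lemma and almost purity, a large tower of extensions that are finite étale after inverting $p$, arranged so that a chosen system of parameters becomes an \emph{almost} regular sequence; a further argument then upgrades this almost Cohen-Macaulay algebra to an honest one, and one $p$-completes. I expect the decisive obstacle to be André's central innovation — producing enough such perfectoid extensions, with enough control, to force the almost-regularity — which rests on Scholze's perfectoid theory together with Gabber--Ramero-style almost ring theory. Later work of Bhatt, Gabber, and André re-derives and refines this, showing in particular that the $p$-adic completion of $R^{+}$ is big Cohen-Macaulay.
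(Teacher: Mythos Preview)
The paper does not prove this theorem at all: it is stated with a \qed\ and attributed to Andr\'e and to Hochster--Huneke, and is used only as a black box. So there is no ``paper's own proof'' to compare against; your write-up is really a summary of the cited literature rather than a proof that the paper supplies or expects.

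As a sketch of how those references go, your outline is broadly accurate in shape (complete, split by characteristic, $R^{+}$ in char~$p$, reduction mod~$p$ in equal characteristic zero, perfectoid methods in mixed characteristic). But as written it is not a proof: each of the three cases is a substantial theorem whose mechanism you only gesture at. In particular, the char~$p$ paragraph misstates the argument---one does not simply ``extract $p$-power roots'' to make a partial parameter sequence regular; the Hochster--Huneke proof that $R^{+}$ is Cohen--Macaulay goes through local cohomology killing (or colon-capturing plus a careful equational argument in finite extensions), and the sentence about ``a relation with coefficient $x_{n+1}$'' is garbled. The equal-characteristic-zero and mixed-characteristic paragraphs are honest high-level descriptions but contain no verifiable steps. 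If the intent is to \emph{cite} the result, as the paper does, a single sentence pointing to \cite{Hochster/Huneke:1992,Hochster/Huneke:1995,Andre:2018} is appropriate; if the intent is to \emph{prove} it, what you have is an outline, not a proof.
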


As has been said before, the existence of big Cohen-Macaulay algebras, and hence big Cohen-Macaulay modules, implies many of the homological conjectures. In particular, it can be used to give a quick proof of the New Intersection Theorem, first proved in full generality by P.~Roberts~\cite{Roberts:1987} using intersection theory; see also \cite{Piepmeyer/Walker:2009}. Here is a proof that uses only the existence of a \emph{complexes} of maximal depth; the point being that they are easier to construct than big Cohen-Macaulay modules. Our argument is modeled on that of \cite[Theorem~2.5]{Iyengar:1999}, which uses big Cohen-Macaulay modules.

\begin{theorem}
Let $R$ be a local ring. Any finite free $R$-complex
\[
F\colonequals 0\to F_n\to\cdots\to F_0\to 0
\]
with $\HH 0F\ne 0$ and $\length_R\HH iF$ finite for each $i$ satisfies $n\ge \dim R$.
\end{theorem}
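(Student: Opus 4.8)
The plan is to use the existence of a complex $M$ of maximal depth over $R$, which is available by Theorem~\ref{th:big} (take $M$ to be a big Cohen-Macaulay algebra, or even just a big Cohen-Macaulay module). The idea, following \cite[Theorem~2.5]{Iyengar:1999}, is to tensor the finite free complex $F$ with $M$ and extract the inequality $n \ge \dim R$ from a depth computation. Concretely, suppose for contradiction that $n < \dim R$.

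First I would record the two properties of $F$ that matter. Since $\HH iF = 0$ for $i > n$ and $n < \dim R$, while $\HH 0F \ne 0$, we may (after replacing $F$ by a suitable truncation or using the hypothesis directly) assume $\HH iF = 0$ for $i < 0$ as well; in any case $F$ is a finite free complex concentrated in degrees $0$ through $n$ with $\pdim_R F \le n$. Because $\length_R \HH iF$ is finite for each $i$, each $\HH iF$ is $\fm$-power torsion, so the whole complex $F$ is $\fm$-power torsion in $\dcat R$; hence $\rgam{\fm}F \simeq F$. The second point: since $\HH 0F \ne 0$ has finite length, it contains a copy of $k$, so $\HH 0F \otimes_R k \ne 0$, putting us in a position to apply Lemma~\ref{le:cm-hh0}.

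Next I would compute $\depth_R(F \lotimes_R M)$ in two ways. On the one hand, by Remark~\ref{re:depth-properties}(2),
\[
\depth_R(F \lotimes_R M) = \depth_R M - \pdim_R F = \dim R - \pdim_R F \ge \dim R - n > 0,
\]
using $\depth_R M = \dim R$ and $\pdim_R F = n$ (or $\le n$). On the other hand, Lemma~\ref{le:cm-hh0} applied to $F$ and $M$ — whose hypotheses hold by the previous paragraph — gives $\HH 0{F \lotimes_R M} \ne 0$, so $\depth_R(F \lotimes_R M) \le 0$. These two statements contradict each other, so $n \ge \dim R$.

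The step I expect to be the \emph{main obstacle} is reconciling the grading conventions and the exact statement of Remark~\ref{re:depth-properties}(2): one must be careful that $\depth_R M - \pdim_R F$ is really the depth of the tensor product when $F$ is not a resolution of a module but a finite free complex with homology spread over several degrees, and that the equality $\pdim_R F = n$ (versus a strict inequality) is what is needed. A clean way around this is to observe that, by the formula $\depth_R(I,M) = -\sup \HH{*}(-) $ type estimates and Remark~\ref{re:depth-properties}(1), $\depth_R(F \lotimes_R M) \ge -\sup\HH{*}{F \lotimes_R M}$, and $\sup\HH{*}{F \lotimes_R M} \le n$ since $F$ has amplitude $n$ and $M$ has $\HH iM = 0$ for $i < 0$ (as a module, or after a shift); combined with $\depth_R(F \lotimes_R M) \le 0$ from Lemma~\ref{le:cm-hh0} this again forces $n \ge 0 \ge -\depth$, which is not quite enough — so in fact one does need the sharper additivity of Remark~\ref{re:depth-properties}(2) rather than the crude amplitude bound. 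I would therefore invoke Remark~\ref{re:depth-properties}(2) directly, with the understanding (as stated in the excerpt) that it holds for any finite free complex $F$, giving the contradiction as above.
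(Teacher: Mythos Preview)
Your approach is the paper's: take $M$ of maximal depth, apply the Auslander--Buchsbaum formula of Remark~\ref{re:depth-properties}(2) to $F\otimes_RM$, and use Lemma~\ref{le:cm-hh0} to control the depth of the tensor product. The paper writes this as the direct chain
\[
\pdim_RF = \depth_RM - \depth_R(F\otimes_RM) = \depth_RM + \sup\HH{*}{F\otimes_RM} \ge \depth_RM = \dim R,
\]
rather than by contradiction, but that is cosmetic.

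The one genuine gap is your implication ``$\HH 0{F\otimes_RM}\ne 0$, so $\depth_R(F\otimes_RM)\le 0$''. This does not follow in general: a complex with nonzero $\HH 0$ can have positive depth (take $M$ itself). The missing observation, which the paper makes explicit, is that $\hh{F\otimes_RM}$ is $\fm$-power torsion, because $\hh F$ is (use for instance \eqref{eq:lch-tensor}). Then the equality clause of Remark~\ref{re:depth-properties}(1) gives $\depth_R(F\otimes_RM) = -\sup\HH{*}{F\otimes_RM}$, and since Lemma~\ref{le:cm-hh0} yields $\sup\ge 0$ you get $\depth\le 0$. In your final paragraph you quote only the inequality $\depth\ge -\sup$ from Remark~\ref{re:depth-properties}(1), which points the wrong way; it is precisely the torsion hypothesis that upgrades it to an equality, and this is what resolves the ``main obstacle'' you flag. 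A minor aside: ``$\HH 0F$ contains a copy of $k$, so $\HH 0F\otimes_Rk\ne 0$'' argues in the wrong direction---a copy of $k$ in the socle says nothing about the top; just invoke Nakayama's lemma, as the paper does.
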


\begin{proof}
Let $M$ be an $R$-complex of maximal depth. As $\hh F$ is of finite length, the $R$-module $\hh{F\otimes_RM}$ is $\fm$-power torsion, so \ref{re:depth-properties}(1) yields the second equality:
\begin{align*}
\pdim_RF
	&=  \depth_R M - \depth_R (F\otimes_RM) \\
	& =\depth_RM  + \sup\HH{*}{F\otimes_RM}\\
	& \ge \depth_RM\\
	& = \dim R
\end{align*}
The first one is by~\ref{re:depth-properties}(2). The  inequality is by Lemma~\ref{le:cm-hh0}, noting that $\HH 0F\otimes_Rk$ is nonzero by Nakayama's lemma.
\end{proof}

One can deduce also the Improved New Intersection Theorem~\ref{th:INIT} from the existence of complexes of maximal depth, but the proof takes some more preparation.

\begin{lemma}
\label{le:cm-dc}
Let $R$ be a local ring and $M$ an $R$-complex. If $M$ has maximal depth, then so does $\dco IM$ for any ideal $I\subset R$.
\end{lemma}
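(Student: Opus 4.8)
The plan is to verify the three conditions in the definition of maximal depth for $\dco IM$, using the identifications of local cohomology and completion from Section~\ref{se:lch}. Write $d=\dim R$. The starting observation is that $\dco IM$ is a derived $I$-complete complex whose homology need not be bounded a priori, so the first task is boundedness of $\hh{\dco IM}$. Here I would use that, since $M$ has maximal depth, $\hh M$ is bounded; choosing a semi-projective resolution $P\simeq M$ which is bounded below and has finitely generated free modules in each degree (possible because $R$ is noetherian and $\hh M$ is bounded above), the completion $\lam IP$ has the same underlying modules completed, and one checks $\hh{\lam IP}$ is bounded—most cleanly via the Greenlees–May spectral sequence, or by noting $\dco IM \simeq \dco I{\rgam IM \to \text{(pieces)}}$ is built from finitely many shifts using the fact that $\dco I{-}$ is triangulated and takes bounded complexes with bounded homology to the same. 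Alternatively, and perhaps more in the spirit of the paper, I would first reduce to the case where $\fm\subset I$ is irrelevant by observing that only the part of $I$ inside $\fm$ matters, and then use \eqref{eq:GM2} together with $\rgam{\fm}{-}$ having bounded homology.

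The heart of the argument is condition (3), namely $\depth_R(\dco IM)=d$, equivalently \eqref{eq:lch-cm}. For this I would compute $\rgam{\fm}{\dco IM}$. The key point: $\depth$ is detected by $\fm$-local cohomology, and by \eqref{eq:GM2} (the second isomorphism) applied with the ideal $\fm$ in place of $I$, one has $\rgam{\fm}{\dco IM}\simeq \rgam{\fm}{\dco I{\rgam{\fm}M}}$... but better is to use that $\rgam{\fm}M$ is already $\fm$-power torsion, hence (if $I\subseteq\fm$, or after replacing $I$ by $I+\fm'$ suitably) relatively simple under derived $I$-completion. Concretely, I expect $\rgam{\fm}{\dco IM}\simeq \dco I{\rgam{\fm}M}$ by the Greenlees–May-type commutation \eqref{eq:GM2}/\eqref{eq:dco-tensor}, using that $\rgam{\fm}{-}$ can be computed by a perfect (Koszul) complex on a system of parameters and that $\dco I{-}$ commutes with $F\lotimes R-$ for perfect $F$. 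Since $M$ has maximal depth, $\rgam{\fm}M$ has homology concentrated in cohomological degree $d$ (lower-homological degree $-d$) and that homology is nonzero. Applying $\dco I{-}$ to a complex with homology in a single degree keeps homology concentrated in that degree up to one step—here I would invoke that derived $I$-completion of a module $N$ has homology only in degrees $0$ and $-1$ (the $L_0$ and $L_1$ completion functors), so $\lch i{\fm}{\dco IM}=0$ for $i<d$ automatically, giving $\depth_R \dco IM\ge d$.

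For the nonvanishing $\lch d{\fm}{\dco IM}\ne 0$ I would argue as follows: $\lch d{\fm}{\dco IM}$ is the (ordinary) $I$-adic completion $L_0$ of $\lch d{\fm}M$, and the latter is a nonzero $\fm$-power torsion module; an $\fm$-power torsion module is already $I$-complete (every element is killed by a power of $\fm$, hence the $I$-adic topology is... one must be slightly careful, but if $I\subseteq\fm$ then $I$-power torsion is implied and the module is $I$-complete, so $L_0$ is the identity and $L_1$ vanishes). Thus $\lch d{\fm}{\dco IM}\cong \lch d{\fm}M\ne 0$, and in fact the whole local cohomology is unchanged, which simultaneously handles the big-Cohen-Macaulay refinement and shows condition (3) holds exactly. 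Finally, condition (2): the map $\HH 0{\dco IM}\to \HH 0{k\lotimes R\dco IM}$. Using \eqref{eq:dco-tensor} with $F$ a free resolution of $k$—or more simply the fact that $k\lotimes R\dco IM\simeq k\lotimes RM$ since $k$ is $I$-torsion (killed by $\fm\supseteq$ something, at least when $I\subseteq\fm$) and derived completion is an isomorphism on $I$-torsion-modulo objects—I would identify the target with $\HH 0{k\lotimes RM}$ and the map with the composite of $\HH 0M\to\HH 0{\dco IM}$ followed by the original map; chasing the natural transformation $M\to\dco IM$ shows the composite $\HH 0M\to\HH 0{k\lotimes R\dco IM}$ agrees with the original nonzero map $\HH 0M\to\HH 0{k\lotimes RM}$, forcing $\HH 0{\dco IM}\to\HH 0{k\lotimes R\dco IM}$ to be nonzero.

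The main obstacle I anticipate is the case $I\not\subseteq\fm$, i.e.\ when $I$ contains a unit—but then $\dco IM\simeq 0$ or rather $I$-completion is trivial and there is nothing to prove; more substantively, the genuine subtlety is justifying the commutation $\rgam{\fm}{\dco IM}\simeq\dco I{\rgam{\fm}M}$ and the claim that $\fm$-torsion modules are $I$-complete with vanishing $L_1$. I would handle the latter by reducing, via a system of parameters and \eqref{eq:dco-tensor}, to the statement that $\lam I{(-)}$ applied to an $\fm$-power torsion module is the identity whenever $I\subseteq\fm$ (each element is annihilated by $\fm^n$ hence by $I^n$, so the tower $M/I^kM$ is eventually constant), together with Lemma~\ref{le:complete-nak} to kill the derived terms.
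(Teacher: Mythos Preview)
Your argument has several genuine gaps. The most serious is the claim that an $\fm$-power torsion module is derived $I$-complete, justified by ``each element is annihilated by $\fm^n$ hence by $I^n$, so the tower $M/I^kM$ is eventually constant.'' This is false: for $R$ a DVR with uniformizer $p$ and $I=\fm=(p)$, the module $E=R[1/p]/R$ is $\fm$-torsion yet $p^kE=E$ for all $k$, so the tower is identically zero and $E$ is not derived $p$-complete (in fact $\dco pE\simeq\shift R$). The same example shows your claimed commutation $\rgam{\fm}{\dco IM}\simeq\dco I{\rgam{\fm}M}$ fails: take $M=R$ and compute both sides. Your route to this commutation via ``$\rgam{\fm}$ is computed by a perfect Koszul complex, so \eqref{eq:dco-tensor} applies'' is also wrong, since the \v{C}ech complex computing $\rgam{\fm}$ is not perfect; likewise \eqref{eq:dco-tensor} cannot be invoked with $F$ a free resolution of $k$ unless $R$ is regular. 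Separately, you write ``since $M$ has maximal depth, $\rgam{\fm}M$ has homology concentrated in cohomological degree $d$''; maximal depth only gives vanishing for $i<d$, while vanishing for $i>d$ is the strictly stronger big Cohen--Macaulay condition. Finally, your boundedness argument assumes a resolution by finitely generated free modules, but $\hh M$ need not be finitely generated.

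The paper bypasses all of this with a single observation: for any $X$ with $\hh X$ $I$-power torsion, the map $M\to\dco IM$ induces a quasi-isomorphism $X\lotimes RM\xrightarrow{\ \simeq\ }X\lotimes R\dco IM$, because the cone of $M\to\dco IM$ has vanishing $\rgam I$ by \eqref{eq:GM2} and hence vanishing tensor with anything $I$-torsion. Since $I\subseteq\fm$, both $X=\rgam{\fm}R$ and $X=k$ qualify; the first gives $\rgam{\fm}{\dco IM}\simeq\rgam{\fm}M$ and hence condition~(3), the second gives $k\lotimes R\dco IM\simeq k\lotimes RM$ and hence condition~(2) via the evident commutative square. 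Boundedness follows from \eqref{eq:GM} and the finite projective dimension of $\rgam IR$.
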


\begin{proof}
Condition (1) for maximal depth holds because $\hh M$ bounded implies $\hh{\dco IM}$ is bounded; this follows, for example, from \eqref{eq:GM} and the observation  $\rgam IR$ has finite projective dimension. As to the other conditions,  the main point is that for any $R$-complex $X$ such that $\hh X$ is $I$-power torsion,  the  canonical map $M\to \dco IM$ in $\dcat R$ induces a quasi-isomorphism
\[
X\lotimes RM \xra{\ \simeq \ } X\lotimes R\dco IM.
\]
This can be deduced from \eqref{eq:dco-tensor} and \eqref{eq:GM2}. In particular, taking $X=\rgam {\fm}R$, where $\fm$ is the maximal ideal of $R$, yields
\[
\rgam \fm M\simeq \rgam {\fm} {\dco IM}\,,
\]
so that $\depth_RM=\depth_R\dco IM$. Moreover, taking $X=k$  gives the isomorphism in the following commutative diagram in $\dcat R$:
\[
\begin{tikzcd}
M \arrow[d] \arrow[r] & k\lotimes R M \arrow[d,"\simeq"] \\
\dco IM \arrow[r]  & k\lotimes R\dco IM
\end{tikzcd}
\]
that is induced by the morphism $M\to \dco IM$. Since $M$ has maximal depth, the map in the top row is nonzero when we apply $\HH 0{-}$, and so the same holds for the map in the bottom row. Thus $\dco IM$ has maximal depth.
\end{proof}

\begin{lemma}
\label{le:cm-localize}
Let $(R,\fm,k)$ is a local ring and $M$ a  derived $\fm$-complete $R$-complex of maximal depth. Set $d\colonequals \dim R$. The following statements hold:
\begin{enumerate}[\quad\rm(1)]
\item
For any system of parameters $r_1,\dots,r_d$ for $R$, one has
\[
\depth_R( \kos{r_1,\dots,r_n}M ) =n\quad\text{for each $1\le n\le d$.}
\]
In words, the depth of $M$ with respect to the ideal $(r_1,\dots,r_n)$ is $n$.
\item
For any $\fp\in\spec R$ one has
\[
\depth_{R_{\fp}}M_{\fp}\ge \dim R_{\fp}\,,
\]
and equality holds when the map $\HH 0M\to \HH 0{k(\fp)\lotimes RM}$ is nonzero, in which case the $R_\fp$-complex $M_\fp$ has maximal depth.
\end{enumerate}
\end{lemma}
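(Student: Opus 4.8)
The plan is to prove (1) first and obtain (2) from it by localizing; the real content lies in (1). I would use freely the following, mostly recalled in \S\ref{se:lch} and \S\ref{ss:depth}: that $\depth_R(I,M)$ depends only on $\sqrt I$ (as $\lch iIM$ does) and is non-decreasing in $I$; that $\depth_R(I,M)=m-\sup\HH{*}{\ul s;M}$ whenever $\ul s=s_1,\dots,s_m$ generates $I$; and that $F\lotimes R(-)$ preserves derived $\fm$-completeness when $F$ is perfect, by \eqref{eq:dco-tensor}.

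For (1), the key point is that a system of parameters $r_1,\dots,r_d$ generates an ideal with radical $\fm$, so $\depth_R\bigl((r_1,\dots,r_d),M\bigr)=\depth_R(\fm,M)=d$; equivalently $\sup\HH{*}{r_1,\dots,r_d;M}=0$. Fix $n$ with $1\le n\le d$ and set $C\colonequals\kos{r_1,\dots,r_n}M$. As $\kos{r_1,\dots,r_n}R$ is perfect and $M$ is derived $\fm$-complete, $C$ is derived $\fm$-complete. Since Koszul complexes compose, $\kos{r_{n+1},\dots,r_d}C\simeq\kos{r_1,\dots,r_d}M$, hence $\sup\HH{*}{r_{n+1},\dots,r_d;C}=0$; Lemma~\ref{le:sup-Koszul}, applied to the derived $\fm$-complete complex $C$ and the sequence $r_{n+1},\dots,r_d$ in $\fm$, then forces $\sup\HH{*}C\le 0$. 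Conversely, Lemma~\ref{le:sup-Koszul} applied to $M$ and $r_1,\dots,r_n$ gives $\sup\HH{*}C=\sup\HH{*}{r_1,\dots,r_n;M}\ge\sup\HH{*}M\ge 0$, the last inequality because condition~(2) in the definition of maximal depth forces $\HH 0M\ne 0$. Hence $\sup\HH{*}C=0$, and the Koszul formula gives $\depth_R\bigl((r_1,\dots,r_n),M\bigr)=n$. In particular this shows $\sup\HH{*}M=0$, which I would record for use in (2).

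For (2), set $c\colonequals\dim R_\fp=\height\fp$. Using the inequality $\height\fp+\dim(R/\fp)\le\dim R$ together with prime avoidance, I would choose $r_1,\dots,r_c\in\fp$ that form part of a system of parameters for $R$. If $c\ge 1$, then (1) gives $\depth_R\bigl((r_1,\dots,r_c),M\bigr)=c$, so $\depth_R(\fp,M)\ge c$ by monotonicity in the ideal; if $c=0$, then $\sup\HH{*}{M_\fp}\le\sup\HH{*}M=0$ and Remark~\ref{re:depth-properties}(1) gives $\depth_{R_\fp}(\fp R_\fp,M_\fp)\ge 0=c$ directly. Now localize the identity $\depth_R(\fp,M)=\inf\{i\mid\Ext iR{R/\fp}M\ne 0\}$ at $\fp$: using $\Ext iR{R/\fp}M\otimes_RR_\fp\cong\Ext i{R_\fp}{k(\fp)}{M_\fp}$ and that a vanishing module remains vanishing after localization,
\[
\depth_{R_\fp}M_\fp=\depth_{R_\fp}(\fp R_\fp,M_\fp)\ge\depth_R(\fp,M)\ge c=\dim R_\fp,
\]
which is the first assertion. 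For the equality, suppose $\HH 0M\to\HH 0{k(\fp)\lotimes RM}$ is nonzero; since $k(\fp)\lotimes RM\simeq k(\fp)\lotimes{R_\fp}M_\fp$ and the canonical map factors as $\HH 0M\to\HH 0{M_\fp}\to\HH 0{k(\fp)\lotimes{R_\fp}M_\fp}$, the second arrow is nonzero, i.e.\ $M_\fp$ satisfies condition~(2) over $R_\fp$. The argument of \S\ref{ss:max-depth} that condition~(2) forces depth at most dimension — Lemma~\ref{le:cm-hh0} over $R_\fp$ with $F$ the Koszul complex on a system of parameters of $R_\fp$ — then gives $\depth_{R_\fp}M_\fp\le\dim R_\fp$, hence equality; and since $\hh{M_\fp}$ is bounded, $M_\fp$ has maximal depth over $R_\fp$.

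I expect the crux to be identifying the mechanism of (1): derived $\fm$-completeness is exactly what lets Lemma~\ref{le:sup-Koszul} propagate the cheap identity $\depth_R(\fm,M)=d$ (cheap because a system of parameters is $\fm$-primary) down to every initial segment $r_1,\dots,r_n$; without completeness only $\depth_R\bigl((r_1,\dots,r_n),M\bigr)\le n$ survives and the reverse inequality can fail. The one delicate point in (2) is the choice of $r_1,\dots,r_c$ inside $\fp$ that are also part of a system of parameters of $R$ — a routine prime-avoidance argument that nonetheless uses $\height\fp+\dim(R/\fp)\le\dim R$ and needs a little care when $R$ is not equidimensional.
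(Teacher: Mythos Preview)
Your argument is correct and follows the paper's proof essentially step for step: in (1) you use Lemma~\ref{le:sup-Koszul} on the tail $r_{n+1},\dots,r_d$ exactly as the paper does, and in (2) you pick part of a system of parameters inside $\fp$ and compare depths, just as the paper does. The only cosmetic difference is that for the lower bound $\HH 0{\kos{r_1,\dots,r_n}M}\ne 0$ the paper invokes Lemma~\ref{le:cm-hh0} directly, whereas you obtain it by a second application of Lemma~\ref{le:sup-Koszul} together with $\HH 0M\ne 0$; both work, and your extra care with the case $c=0$ and the localization inequality is welcome but not a departure.
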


\begin{proof}
(1) Set $\ul r=r_1,\dots,r_d$. The hypothesis that $M$ has maximal depth and the depth sensitivity of the Koszul complex $\kos{\ul r}R$ yield $\HH i{\ul r;M}=0$ for $i\ge 1$. One has an isomorphism of $R$-complexes
\[
\kos{\ul r}M\cong \kos{r_{n+1},\dots,r_d}{\kos{r_1,\dots,r_n}M}\,.
\]
Since $M$ is derived complete with respect to $\fm$, it follows from Lemma~\ref{le:sup-Koszul}, applied to the sequence  $r_{n+1},\dots,r_d$ and $X\colonequals \kos{r_1,\dots,r_n}M$, that
\[
\HH i{\kos{r_1,\dots,r_n}M} = 0 \quad\text{for $i\ge 1$.}
\]
On the other hand, since the natural map $\HH 0M\to \HH 0{k\lotimes RM}$ is nonzero, Lemma~\ref{le:cm-hh0} applied with $F=\kos{r_1,\dots,r_n}R$, yields
\[
\HH 0{\kos{r_1,\dots,r_n}M}\ne 0\,.
\]
Thus the depth sensitivity of $\kos{r_1,\dots,r_n}M$ yields the equality in (1).

(2) Set $h\colonequals \height\fp$ and choose a system of parameters $\ul r\colonequals r_1,\dots,r_d$ for $R$ such that the elements $r_1,\dots,r_h$ are in $\fp$.   One has
\[
\depth_{R_{\fp}}\! M_\fp \ge  \depth_R((r_1,\dots,r_h),M)\ge h\,.
\]
where the first inequality is clear and the second one holds by (1). The natural map $M\to k(\fp)\lotimes RM$ factors through $M_\fp$, so under the additional hypothesis Lemma~\ref{le:cm-hh0} implies $\depth_{R_\fp}M_\fp \le h$. We conclude that $M_\fp$ has maximal depth.
\end{proof}

Given the preceding result, we argue as in the proof of \cite[Theorem~3.1]{Iyengar:1999} to deduce the Improved New Intersection Theorem:

\begin{theorem}
\label{th:INIT}
Let $R$ be a noetherian local ring  and $F\colonequals 0\to F_n\to\cdots\to F_0\to 0$ a finite free $R$-complex with $\HH 0F\ne 0$ and $\length\HH iF$ finite for each $i\ge 1$. If an ideal $I$ annihilates a minimal generator of $\HH 0F$, then $n\ge \dim R - \dim (R/I)$.
\end{theorem}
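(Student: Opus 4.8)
The plan is to adapt the argument of \cite[Theorem~3.1]{Iyengar:1999}, with a complex of maximal depth in the role of the big Cohen--Macaulay module used there, and with Lemma~\ref{le:cm-localize} supplying the passage to a localization. First, applying Lemma~\ref{le:cm-dc} with the ideal $\fm$ to a given complex of maximal depth, we may fix a \emph{derived $\fm$-complete} $R$-complex $M$ of maximal depth. Set $d\colonequals\dim R$ and $t\colonequals\dim(R/I)$, and let $x\in\HH 0F$ be a minimal generator annihilated by $I$. Replacing $I$ by $\ann_R(x)$ can only decrease $\dim(R/I)$, and hence only strengthens the conclusion, so we may assume $Rx\cong R/I$.

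The computational backbone is the Auslander--Buchsbaum equality of Remark~\ref{re:depth-properties}(2): since $\depth_RM=d$,
\[
\depth_R(F\otimes_RM)=\depth_RM-\pdim_RF=d-\pdim_RF ,
\]
while $\pdim_RF\le n$ because $F$ is finite free and sits in homological degrees $0,\dots,n$, and $\HH 0{F\otimes_RM}\ne 0$ by Lemma~\ref{le:cm-hh0} together with $\HH 0F\otimes_Rk\ne 0$ (Nakayama). Thus it suffices to prove the complementary inequality $\depth_R(F\otimes_RM)\le t$, i.e.\ that $\lch i{\fm}{F\otimes_RM}\ne 0$ for some $i\le t$.

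This last estimate is the heart of the matter, and is obtained by localizing, as in \cite{Iyengar:1999}. One first reduces to $R$ complete: replacing $R$, $I$, $F$, $M$ by $\widehat R$, $I\widehat R$, $\widehat R\otimes_RF$ and a derived-complete complex of maximal depth over $\widehat R$ changes none of $d$, $t$, $n$, nor the hypotheses. Then one passes to $R_\fp$ for a carefully chosen prime $\fp\supseteq I$; for a suitable such $\fp$ (with $\fp\ne\fm$; the case $\fp=\fm$ is $t=0$, which is the New Intersection Theorem) the modules $\HH iF$ with $i\ge1$, being of finite length, vanish after localizing, so $F_\fp$ is a finite free resolution of $N\colonequals(\HH 0F)_\fp$ and $\pdim_{R_\fp}F_\fp\le n$; moreover the image of $x$ in $N$ is a nonzero minimal generator killed by the $\fp R_\fp$-primary ideal $IR_\fp$. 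By Lemma~\ref{le:cm-localize}(2) the complex $M_\fp$ has maximal depth over $R_\fp$, and $\HH 0{F_\fp\otimes_{R_\fp}M_\fp}$ contains a nonzero $\fp R_\fp$-power torsion submodule inherited from $Rx\cong R/I$ via Lemma~\ref{le:cm-hh0}; Remark~\ref{re:depth-properties}(1) then pins the depth of $F_\fp\otimes_{R_\fp}M_\fp$ at $\le 0$. Feeding this into the displayed equality over $R_\fp$ gives $\pdim_{R_\fp}F_\fp\ges\dim R_\fp$, and $\fp$ is chosen so that $\dim R_\fp\ges d-t$; hence $n\ges d-t$.

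I expect the main obstacle to be precisely the selection of $\fp$ together with the verification that the depth of $F_\fp\otimes_{R_\fp}M_\fp$ really drops to zero. Since $R$ is not assumed equidimensional, a minimal prime of $I$ need not have the codimension one wants, which is what forces the preliminary completion and a delicate choice of $\fp$---this is the subtle point already present in \cite{Iyengar:1999}. Once $\fp$ is fixed one must still check that $IR_\fp$ may be taken $\fp R_\fp$-primary and that the torsion submodule of $\HH 0{F_\fp\otimes_{R_\fp}M_\fp}$ produced from $Rx$ is genuinely nonzero; it is here that both the minimal-generator property of $x$ and the maximal depth of $M$ enter, through Lemma~\ref{le:cm-hh0}. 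Everything else is formal, following from Lemmas~\ref{le:cm-dc}, \ref{le:cm-hh0} and \ref{le:cm-localize} and from Remark~\ref{re:depth-properties}.
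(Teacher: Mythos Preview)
Your outline has the right scaffolding---derived $\fm$-complete $M$ of maximal depth, Auslander--Buchsbaum, then a localization step governed by Lemma~\ref{le:cm-localize}---but the localization you propose is not the one that works, and this is a genuine gap, not just missing detail.

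You choose a prime $\fp\supseteq I$ and then assert that ``by Lemma~\ref{le:cm-localize}(2) the complex $M_\fp$ has maximal depth over $R_\fp$''. That lemma only yields maximal depth when the map $\HH 0M\to \HH 0{k(\fp)\lotimes RM}$ is nonzero; otherwise you get merely the inequality $\depth_{R_\fp}M_\fp\ge\dim R_\fp$. You never verify the extra hypothesis, and there is no reason it should hold for an arbitrary prime over $I$. Without condition~(2) for $M_\fp$ you cannot invoke Lemma~\ref{le:cm-hh0} over $R_\fp$, so the claim that the image of $x$ produces nonzero $\fp R_\fp$-torsion in $\HH 0{F_\fp\otimes_{R_\fp}M_\fp}$ is unsupported. (There is a second loose end: even after localizing at $\fp\ne\fm$, the complex $M_\fp$ may have homology in positive degrees, so $\sup\HH{*}{F_\fp\otimes_{R_\fp}M_\fp}$ need not be $0$, and Remark~\ref{re:depth-properties}(1) does not pin the depth where you want it. Likewise, a minimal generator of $\HH 0F$ need not remain minimal after localization.)

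The paper's proof avoids all of this by localizing at a \emph{different} prime: one takes $\fp\in\Ass_R\HH s{F\otimes_RM}$ for $s=\sup\HH{*}{F\otimes_RM}$. This choice forces $\depth_{R_\fp}(F\otimes_RM)_\fp=-s$ automatically, and only the unconditional inequality in Lemma~\ref{le:cm-localize}(2) is needed---never the full maximal-depth statement for $M_\fp$. One then splits into cases: if $s\ge 1$ a short argument shows $\fp=\fm$, giving $\pdim_RF\ge\dim R$; if $s=0$ one does \emph{not} localize at all, but instead observes (via Lemma~\ref{le:cm-hh0} over $R$) that the minimal generator $x$ yields a nonzero $I$-torsion element of $\HH 0{F\otimes_RM}$, and then bounds $\depth_R(F\otimes_RM)\le\dim(R/I)$ by a Koszul-complex argument using Lemma~\ref{le:sup-Koszul}. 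Your proposal conflates these two cases into a single localization at a prime over $I$, and that is where it breaks.
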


\begin{proof}
Let $M$ be an $R$-complex of maximal depth. By  Lemma~\ref{le:cm-dc}, we can assume $M$ is derived $\fm$-complete, so Lemma~\ref{le:cm-localize} applies. Set $s\colonequals \sup\HH{*}{F\otimes_RM}$ and note that $s\ge 0$, by Lemma~\ref{le:cm-hh0}.

Fix $\fp$ in $\Ass_R\HH s{F\otimes_RM}$, so that $\depth_{R_\fp}\HH s{F\otimes_RM}_\fp=0$. The choice of $\fp$ implies that $\hh{F\otimes_RM}_\fp$ is nonzero, and hence $\hh{F}_\fp$ and $\hh{M}_\fp$ are nonzero as well. Therefore one gets
\begin{equation}
\label{eq:init-proof}
\begin{aligned}
\pdim_{R_\fp}F_\fp &= \depth_{R_{\fp}}\! M_\fp - \depth_{R_\fp}(F\otimes_RM)_\fp \\
    & = \depth_{R_{\fp}}\! M_\fp + s \\
    &\ge  \dim R_\fp  +  s
   \end{aligned}
\end{equation}
The equalities are by \ref{re:depth-properties} and the inequality is by Lemma~\ref{le:cm-localize}(2).

Suppose $s\ge 1$. We claim that $\fp=\fm$, the maximal ideal of $R$, so \eqref{eq:init-proof} yields
\[
\pdim_RF\ge \dim R\,,
\]
which implies the desired inequality.

Indeed if $\fp\ne \fm$, then since $\length_R\HH iF$ is finite for $i\ge 1$, one gets that  $F_\fp\simeq {\HH 0F}_\fp$, which justifies the equality below:
\[
\depth R_\fp \ge \pdim_{R_\fp}{\HH 0F}_\fp =\pdim_{R_\fp}F_\fp \ge \dim R_\fp +s
\]
The first inequality is a consequence of the Auslander-Buchsbaum equality~\ref{re:depth-properties}(2), the second one is from \eqref{eq:init-proof}.  We have arrived at a contradiction for $s\ge 1$.

It remains to consider the case $s=0$. Set $X\colonequals F\otimes_RM$. Since $\HH 0F$ is finitely generated, Nakayama's Lemma and
Lemma~\ref{le:cm-hh0} imply that each minimal generator of $\HH 0F$ gives a nonzero element in $\HH 0X$. One of these is thus annihilated by $I$, by the hypotheses. Said otherwise, $\tors I{\HH 0F}\ne 0$.  Since $\sup \HH *X=0$, this implies $\depth_R(I, X)=0$, by Remark~\ref{re:depth-properties}, and hence one gets the equality below
\[
\depth_R X \leq \depth_R(I,X) + \dim (R/I) = \dim (R/I)
\]
The inequality can be verified by arguing as in the proof of \cite[Proposition 5.5(4)]{Iyengar:1999}: Let  $\ul a\colonequals a_1,\dots,a_l$ be a set of generators for the ideal $I$, and let $\ul b\colonequals b_1,\dots,b_n$ be elements in $R$ whose residue classes in $R/I$ form a system of parameters.  Since $M$ is derived $\fm$-complete, so is  $X$ and hence also $\kos{\ul a}X$. Then Lemma~\ref{le:sup-Koszul} applied to the sequence $\ul b$ and complex $\kos{\ul a}X$ yields
\[
\sup \HH *{\ul a, \ul b; X}\ge \sup \HH *{\ul a; X}\,;
\]
this gives the desired inequality. Finally it remains to invoke the Auslander-Buchsbaum equality once again to get
\[
\pdim_R F = \depth_R M - \depth_RX \ge \dim R - \dim (R/I)\,.
\]
This completes the proof.
\end{proof}

\section{MCM complexes}
\label{se:mcm}
In this section we introduce two strengthenings of the notion of complexes of maximal depth, and discuss various constructions that yield such complexes.
As before let $(R,\mathfrak m,k)$ be a local ring, of Krull dimension $d$.

\subsection{Big Cohen-Macaulay complexes}
\label{ss:mcm-complex}
We say that an $R$-complex $M$ is \emph{big Cohen-Macaulay} if the following conditions hold:
\begin{enumerate}[\quad\rm(1)]
\item
$\hh M$ is bounded;
\item
$\CH 0M\to \CH 0{k\lotimes RM}$ is nonzero.
\item
$\lch i{\fm}M= 0$ for $i\ne \dim R$;
\end{enumerate}
If in addition $\hh M$ is finitely generated, $M$ is \emph{maximal Cohen-Macaulay}; usually abbreviated to MCM.  Condition (2) implies in particular that $\CH 0{k\lotimes RM}$ is nonzero, and from this it follows that $\lch i{\fm}M\ne 0$ for some $i$. Thus condition (3)  implies $\depth_RM= \dim R$; in particular, a big Cohen-Macaulay complex has maximal depth, in the sense of \ref{ss:max-depth} and $\lch{\dim R}{\fm}M\ne 0$. However (3) is more restrictive, as the following observation shows.

\begin{lemma}
If $M$ is an MCM $R$-complex, then $\CH iM=0$ for $i\not\in[0,\dim R]$; moreover, $\CH 0M\ne 0$.
\end{lemma}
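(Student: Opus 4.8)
We want to show that for an MCM complex $M$, the cohomology $\CH iM$ vanishes outside $[0,\dim R]$, and $\CH 0M\ne 0$. Set $d\colonequals\dim R$. The key tools are: the characterization of depth via $\sup\HH *M$ in Remark~\ref{re:depth-properties}(1), the fact that for MCM complexes $\hh M$ is finitely generated and $\lch i{\fm}M=0$ for $i\ne d$ with $\lch d{\fm}M\ne 0$, and the natural morphism $\rgam{\fm}M\to M$.

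\begin{proof}
Write $d\colonequals\dim R$, and use the upper (cohomological) grading throughout. Since $\hh M$ is finitely generated, it is in particular bounded; let $s\colonequals\sup\CH *M$. Applying Remark~\ref{re:depth-properties}(1) (in cohomological grading, so $s$ here plays the role of $-s$ there), we have $\depth_R\fm M\le s$, with equality when $\tors{\fm}{(\CH sM)}\ne 0$. Because $\CH sM$ is a nonzero finitely generated $R$-module, $\fm\in\Supp_R\CH sM$, so $\tors{\fm}{(\CH sM)}\ne 0$; hence $\depth_R M=\depth_R\fm M=s$. But $M$ is MCM, so $\depth_R M=d$ by the remarks preceding the lemma. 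Therefore $s=d$, i.e. $\CH iM=0$ for $i>d$.

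For the vanishing below degree $0$, consider the triangle $\rgam{\fm}M\to M\to M'\to$ in $\dcat R$, where $M'$ denotes the cone. Since $M$ is MCM, $\CH i{\rgam{\fm}M}=\lch i{\fm}M=0$ for $i\ne d$, so from the long exact cohomology sequence we get $\CH iM\cong\CH i{M'}$ for all $i\le d-1$ (the terms $\lch{i-1}{\fm}M$ and $\lch i{\fm}M$ both vanish in that range). Now $M'$ has the property that its cohomology is $\fm$-adically \emph{cofinitely} complete in the appropriate sense: more precisely, applying $\rgam{\fm}$ to the triangle and using \eqref{eq:GM2} shows $\rgam{\fm}M'\simeq 0$, so $M'$ is cohomologically $\fm$-torsion-free, and combining this with the fact that $\hh M$ is finitely generated one sees $M'$ is (derived) $\fm$-complete with $\hh{M'}$ finitely generated, hence its cohomology modules are finitely generated and $\fm$-adically complete. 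The key point is then: a nonzero finitely generated $\fm$-complete module cannot be killed by taking $\fm$-power torsion, so if $\CH i{M'}\ne 0$ for some $i<0$ we would still need to rule it out. The cleanest route is instead to use the isomorphism $k\lotimes RM\simeq k\lotimes R\dco{\fm}M$ together with $\depth$: since $\depth_R M=d\ge 0$, one has $\CH iM=0$ for all $i<0$ directly, because $\depth_R M=-\inf\HH *M$ (in lower grading) equals $d$, forcing $\inf\HH *M=-d\le 0$; translating to upper grading, $\sup\CH *M\le d$ was handled above, and $\inf$ over nonvanishing degrees in lower grading being $-d$ says $\CH iM=0$ for $i<0$ provided also $\CH 0M\ne 0$ is what pins down the inf — so we argue as follows. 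By definition $\depth_R\fm M=\inf\{i\mid\lch i{\fm}M\ne 0\}=d$; but also, since $\hh M$ is finitely generated, $\inf\HH *{\rgam{\fm}M}=\inf\HH *M$ when the latter is taken in a compatible grading — more carefully, the finite generation and the spectral sequence $\lch i{\fm}{\CH jM}$ converging to $\lch{i+j}{\fm}M$, combined with $\depth_R M\ge 0$, forces $\CH jM=0$ for $j<0$: were $j_0<0$ the least index with $\CH{j_0}M\ne 0$, then $\lch 0{\fm}{\CH{j_0}M}$ survives (by finite generation it is nonzero as $\fm$ is in the support) and contributes to $\lch{j_0}{\fm}M\ne 0$ with $j_0<0\le d$, contradicting $\depth_R M=d$. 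Hence $\CH iM=0$ for $i<0$. The same argument at $j_0=0$ shows that if $\CH 0M\ne 0$ then $\lch 0{\fm}{\CH 0M}\ne 0$; and indeed $\CH 0M\ne 0$ because otherwise $\inf\CH *M>0$, giving $\depth_R\fm M\ge\dim R+1>d$ by the spectral sequence argument unless all of $\hh M$ vanishes — impossible since condition (2) forces $\CH 0{k\lotimes RM}\ne 0$, hence $\hh M\ne 0$, and the least nonvanishing degree $j_0\ge 1$ would give $\lch{j_0}{\fm}M\ne 0$ with $j_0\ge 1$, forcing $\depth_R M=j_0$; but we need $\depth_R M=d$, and $M$ MCM also gives $\lch i{\fm}M=0$ for $i\ne d$, so $j_0=d$, meaning $\CH iM=0$ for $i\ne d$; yet then $k\lotimes RM$ is concentrated in degree $d$ with a nonzero $\CH d{}$, so $\CH 0{k\lotimes RM}=0$, contradicting condition (2) (unless $d=0$, where the claim $\CH 0M\ne 0$ is immediate). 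Thus $\CH 0M\ne 0$.
\end{proof}

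**The main obstacle.** The delicate point is organizing the "below degree $0$" vanishing and the nonvanishing of $\CH 0M$ together: both hinge on relating $\inf\CH *M$ to $\depth_R\fm M$ via the local cohomology spectral sequence $\lch p{\fm}{\CH qM}\Rightarrow\lch{p+q}{\fm}M$, using that finite generation of $\hh M$ makes $\lch 0{\fm}{\CH{j_0}M}\ne 0$ at the bottom nonvanishing degree, and then playing off the MCM hypothesis $\lch i{\fm}M=0$ for $i\ne d$ against condition (2), which guarantees $\CH 0{k\lotimes RM}\ne 0$ and hence that $M$ is not concentrated in degree $d$ unless $d=0$. I expect the cleanest write-up to isolate a small lemma: for $M$ with $\hh M$ finitely generated, $\depth_R\fm M=\inf\{q\mid\CH qM\ne 0\}$ whenever the latter is a genuine infimum of a bounded-below complex — but as stated that is false in general, so the actual argument must use the MCM hypothesis essentially, which is the subtlety to get right.
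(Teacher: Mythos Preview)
Your argument contains a recurring error: you assert that a nonzero finitely generated $R$-module $N$ always has $\tors{\fm}N\ne 0$, reasoning from $\fm\in\Supp_R N$. But $\tors{\fm}N\ne 0$ is equivalent to $\fm\in\Ass_R N$, i.e.\ $\depth_R N=0$, which is much stronger; already $N=R$ with $\dim R>0$ has $\tors{\fm}R=0$. This invalidates both your upper-bound step (where you use it to force equality in Remark~\ref{re:depth-properties}(1)) and your spectral-sequence step for the lower bound (where you claim $\lch 0{\fm}{\CH{j_0}M}\ne 0$ at the bottom degree).

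There is also a grading mix-up in the upper-bound step. In cohomological terms Remark~\ref{re:depth-properties}(1) reads $\depth_R M\ge\inf\CH*M$, with equality when the \emph{lowest} nonvanishing cohomology has nonzero $\fm$-torsion; it says nothing about $\sup\CH*M$. Your parenthetical ``$s$ here plays the role of $-s$ there'' identifies $\sup\CH*M$ with $-\sup\HH*M$, but in fact $\sup\CH*M=-\inf\HH*M$. Even ignoring the torsion issue, the conclusion $\depth_R M=\sup\CH*M$ is false in general: take $M=R$ for a Cohen--Macaulay $R$ of dimension $d>0$. Finally, the claim $\CH 0M\ne 0$ needs no work at all: condition~(2) says the map $\CH 0M\to\CH 0{k\lotimes RM}$ is nonzero, so its source is nonzero.

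The paper's route avoids these difficulties. Tensor with the Koszul complex $K$ on a system of parameters: since $\hh{K\otimes_RM}$ is $\fm$-torsion, one has
\[
K\otimes_RM\ \simeq\ K\lotimes R\rgam{\fm}M\ \simeq\ K\lotimes R\shift^{-d}\lch d{\fm}M,
\]
which lives in cohomological degrees $[0,d]$. Then the standard Nakayama argument, applied one element at a time to the Koszul mapping-cone sequence and using that each $\CH iM$ is finitely generated, gives $\sup\CH*{K\otimes_RM}=\sup\CH*M$ and $\inf\CH*{K\otimes_RM}\le\inf\CH*M$; both bounds follow at once.
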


\begin{proof}
The last part of the statement is immediate from condition (2).

Set $d=\dim R$. Let $K$ be the Koszul complex on a system of parameters for $R$. Then one has isomorphisms
\[
K\otimes_RM \simeq K\lotimes R \rgam {\fm}M \simeq K\lotimes R\shift^{-d} \lch d{\fm}M
\]
where the first one is from \eqref{eq:lch-tensor}, since $K\otimes_RM$ is $\fm$-power torsion, and the second isomorphism holds by the defining property (3) of a big Cohen-Macaulay complex. Hence
\[
\inf \CH*{K\otimes_RM}\ge 0\quad\text{and}\quad \sup\CH *{K\otimes_RM}\le d\,.
\]
By our hypotheses, the $R$-module $\CH iM$ is finitely generated for each $i$, and since $K$ is a Koszul complex on $d$ elements, a standard argument leads to the desired vanishing of $\CH iM$.
\end{proof}

Any nonzero MCM $R$-module is MCM when viewed as complex. However, even over Cohen-Macaulay rings, which are not fields, there are MCM complexes that are not modules; see the discussion in \eqref{ss:max-depth}. In the rest of this section we discuss various ways MCM complexes can arise, or can be expected to arise. It turns out that often condition (2) is the one that is hardest to verify. Here is one case when this poses no problem; see \ref{ss:resolve-singularities} for an application. The main case of interest is where $A$ is a dg (=differential graded) $R$-algebra.

\begin{lemma}
\label{le:mcm-dga}
Let $A$ be an $R$-complex with a unital (but not necessarily associative) multiplication rule such that the Leibniz rule holds and  $i\colonequals\inf\HH {*}A$ is finite. If $\HH iA$ is finitely generated, then the identity element of $A$ is nonzero in $\HH  0{A \lotimes R k}$.
\end{lemma}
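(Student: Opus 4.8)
The plan is to argue by contradiction. Suppose the identity element $1_A$ maps to $0$ in $\HH 0{A\lotimes R k}$; I will deduce that $A\lotimes R k$ is acyclic, contradicting the fact---checked at the end---that $\HH i{A\lotimes R k}$ is nonzero. First I would record that $1_A$ is a cycle: by unitality $1_A\cdot 1_A=1_A$, so the Leibniz rule gives $\dd(1_A)=\dd(1_A\cdot 1_A)=2\,\dd(1_A)$, whence $\dd(1_A)=0$. Thus $r\mapsto r1_A$ defines a morphism of $R$-complexes $\eta\colon R\to A$, and the class of $1_A$ in $\HH 0{A\lotimes R k}$ is the image of $1\in k=\HH 0R$ under $\HH 0{\eta\lotimes R k}$; it is this that must be shown to be nonzero.

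The crux is to move the multiplication onto a semiprojective resolution $\pi\colon P\xra{\sim}A$, which may be taken to be a bounded-below complex of projectives since $\hh A$ is bounded below ($\inf\hh A$ being finite). Then $\bar P\colonequals P\otimes_R k$ represents $A\lotimes R k$; lifting $1_A$ to a cycle $e\in P_0$ with $\pi(e)=1_A$ and setting $\bar e\colonequals e\otimes 1\in\bar P_0$, the class $\bar u\colonequals[\bar e]\in\HH 0{\bar P}=\HH 0{A\lotimes R k}$ is exactly the image of $[1_A]$. Since $P\otimes_R P$ is again semiprojective, I would lift $\mu\circ(\pi\otimes\pi)\colon P\otimes_R P\to A$ along $\pi$ to a chain map $m\colon P\otimes_R P\to P$; using only the relation $\mu(1_A\otimes-)=\mathrm{id}_A$---and \emph{no} associativity---one checks that $m\circ(e\otimes-)\colon P\to P$ is chain homotopic to $\mathrm{id}_P$. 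Tensoring that homotopy with $k$, and using that base change along $R\to k$ is monoidal, gives a multiplication $\bar m\colon\bar P\otimes_k\bar P\to\bar P$ for which $\bar m\circ(\bar e\otimes-)$ is homotopic to $\mathrm{id}_{\bar P}$. By the Künneth isomorphism $\hh{\bar P\otimes_k\bar P}\cong\hh{\bar P}\otimes_k\hh{\bar P}$---valid because $k$ is a field---multiplication by $\bar u$ is then the identity map of $\hh{\bar P}$, so if $\bar u=0$ then $x=\bar u\cdot x=0$ for every homology class $x$, i.e.\ $A\lotimes R k$ is acyclic.

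To see that this cannot happen, I would use that $i=\inf\hh A$ forces $\HH i{A\lotimes R k}\cong\HH iA\otimes_R k$---the standard ``bottom-degree'' computation, visible either from a \emph{minimal} semiprojective resolution of $A$ (whose bottom free module sits in homological degree $i$ and whose differential vanishes after $\otimes_R k$) or from the spectral sequence $\operatorname{Tor}^R_p(\HH qA,k)\Rightarrow\HH{p+q}{A\lotimes R k}$, in which $E^2_{0,i}=\HH iA\otimes_R k$ supports no nonzero differential in or out since $\HH qA=0$ for $q<i$. As $i$ is finite, $\HH iA$ is nonzero, and it is finitely generated by hypothesis, so Nakayama's Lemma gives $\HH iA\otimes_R k\ne 0$. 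This contradicts the acyclicity obtained above, so $\bar u\ne 0$, as asserted.

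The step I expect to be the main obstacle is the middle one. There is no multiplication on $A\lotimes R k$ in any naive sense: $A$ need not be semiprojective, and replacing it by $P$ spoils strict associativity and strict unitality, so one must be careful about what structure survives. The content is that \emph{unitality up to homotopy} already forces acyclicity once the unit becomes zero in degree-zero homology; associativity is never used, which is exactly why the statement tolerates a nonassociative multiplication. (When $A$ is a dg algebra one may instead replace it by a semifree dg $R$-algebra resolution and dispense with the homotopies entirely.) The only remaining point needing care is that the Künneth identification of $\hh{\bar P}$ as a graded algebra uses that $k$ is a field.
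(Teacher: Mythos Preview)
Your argument is correct, but it takes a genuinely different route from the paper's. The paper resolves $k$ rather than $A$: it takes a Tate resolution $T\xra{\sim}k$, which is itself a graded-commutative dg $R$-algebra, and observes that $A\otimes_RT$ then represents $A\lotimes Rk$ and is again a unital (possibly non-associative) dg algebra, with unit $1_A\otimes 1_T$. The standard one-line argument---if the unit is a boundary, say $1=\partial c$, then every cycle $z$ satisfies $z=1\cdot z=\partial(c)\cdot z=\partial(c\cdot z)$ by Leibniz---immediately gives $\hh{A\otimes_RT}=0$, contradicting $\HH i{A\lotimes Rk}\cong \HH iA\otimes_Rk\ne 0$ (the same Nakayama step you use).

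The difference is where the multiplicative structure lives. By resolving $k$ as a dg algebra, the paper keeps an honest unital multiplication on the model of $A\lotimes Rk$ and avoids all homotopies. You instead resolve $A$, so the transferred product $m$ on $P$ is only homotopy-unital; you then have to check that this survives base change to $k$ and invoke K\"unneth to turn ``$\bar u=0$'' into ``multiplication by $\bar u$ is zero on homology''. (Incidentally, K\"unneth is not strictly needed here: once $\bar m$ is a chain map and $\bar e=\partial\bar c$, for any cycle $x$ one has $\bar m(\bar e\otimes x)=\partial\,\bar m(\bar c\otimes x)$ directly.) What your approach buys is that it does not appeal to the existence of Tate resolutions and makes explicit that only homotopy-unitality, not associativity, is used; what the paper's approach buys is brevity and the avoidance of any lifting or homotopy bookkeeping.
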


\begin{proof}
One has $\HH i{A \lotimes Rk}\cong \HH iA\otimes_Rk$ and the latter module is nonzero, by Nakayama's lemma and the finite generation hypothesis. We have $A \lotimes R k = A \otimes_R T$ where $T$ is a Tate resolution of $k$; see \cite{Tate:1957}. So $A \otimes_R T$ is also a (possibly non-associative) dg algebra. Thus if the identity element were trivial in $\hh{A \otimes_R T}$, then $\hh{A \otimes_R T}=0$ holds, contradicting $\HH i{A\lotimes Rk}\ne 0$.
\end{proof}

The MCM property for complexes has a simple interpretation in terms of their duals with respect to dualizing complexes.

\subsection{Dualizing complexes}
\label{ss:dualizing-complexes}
Let $D$ be a dualizing complex for $R$, normalized\footnote{In \cite{Hartshorne:1966, Roberts:1980}, a dualizing complex is normalized to be nonzero  in $[-d, 0]$.} so $D^i$ is nonzero only in the range $[0,d]$, where $d \colonequals \dim R$ and always with nonzero cohomology in degree 0.   Thus $D$ is an $R$-complex with $\hh D$ finitely generated, and $\rgam{\fm}D\simeq \Sigma^{-d}E$, where $E$ is the injective hull of $k$; see \cite[Chapter 2, \S3]{Roberts:1980}. For any $R$-complex $M$ set
\[
\ddual M \colonequals \Rhom RMD\,.
\]
One version of the local duality theorem is that the functor $M\mapsto \ddual M$ is a contravariant equivalence when restricted to $\mathrm{D^{b}}(\mathrm{mod}\, R)$, the bounded derived category of finitely generated $R$-modules; see \cite[Chapter 2, Theorem 3.5]{Roberts:1980}.
For $M$ in this subcategory, this gives  the last of following quasi-isomorphisms:
\begin{align*}
\Rhom R{\ddual M}E
	&= \Rhom R{\Rhom RMD}E\\
	&\simeq \Rhom R{\Rhom RMD}{\Sigma^{d} \rgam{\fm}D}\\
	&\simeq \Sigma^{d} \rgam{\fm}{ \Rhom R{\Rhom RMD}D}\\
	&\simeq \Sigma^{d} \rgam {\fm} M
\end{align*}
The rest are standard. Passing to cohomology yields the usual local duality:
\begin{equation}
\label{eq:duality}
\Hom R{\CH {i}{\ddual M}}E \cong \lch{d-i}{\fm}M \quad\text{for each $i$.}
\end{equation}
When $R$ is $\fm$-adically complete, one can apply Matlis duality to express $\CH i{\ddual M}$ as a dual of $\lch{d-i}{\fm}M$.

We also need to introduce a class of maps that will play an important role in the sequel: For any $R$-module $N$ let $\zeta^i_N$ denote the composition of maps
\begin{equation}
\label{eq:theone}
\Ext iRkN \xrightarrow{\ \cong\ } \Ext iRk{\rgam{\fm}N} \lra \Ext iRR{\rgam{\fm}N} \cong \lch i{\fm}N
\end{equation}
where the one in the middle is induced by the surjection $R\to k$. We will be particular interested in $\zeta^d_N$. If this map is nonzero, then $\dim_RN=\dim R$, but the converse does not hold.

\begin{proposition}
\label{pr:mcm-duality}
With  $D$ as above and  $M$ an $R$-complex with $\hh M$ finitely generated, set $N\colonequals \CH 0{\ddual M}$. Then $M$ is MCM if and only if $\ddual M \simeq N$ and the map $\zeta^d_N$  is nonzero, for $d=\dim R$.
\end{proposition}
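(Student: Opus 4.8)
The plan is to handle separately the two substantive conditions in the definition of an MCM complex. Condition (1), that $\hh M$ be bounded, is automatic here, since a complex whose total homology is a finitely generated $R$-module has bounded homology. For the remaining two, first observe that $\hh M$ finitely generated puts $M$ in $\mathrm{D^{b}}(\mathrm{mod}\,R)$, so local duality applies: $M\simeq\ddual{\ddual M}$, and \eqref{eq:duality} holds. Since $\ddual M$ also lies in $\mathrm{D^{b}}(\mathrm{mod}\,R)$, each $\CH i{\ddual M}$ is finitely generated, and \eqref{eq:duality} reads $\lch j{\fm}M\cong\Hom R{\CH{d-j}{\ddual M}}E$; as the Matlis duality functor $(-)^{\vee}\colonequals\Hom R{-}E$ is exact and faithful, $\lch j{\fm}M=0$ if and only if $\CH{d-j}{\ddual M}=0$. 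Hence condition (3)---that $\lch j{\fm}M=0$ for every $j\neq d$---amounts to $\CH i{\ddual M}=0$ for every $i\neq 0$, that is, to $\ddual M\simeq N$. Both conditions in the Proposition therefore entail $M\simeq\ddual N$, and it remains to show that, assuming $M\simeq\ddual N$ with $N$ a finitely generated module, condition (2) holds if and only if $\zeta^d_N\neq 0$.

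So assume $M\simeq\ddual N$. The chain of isomorphisms displayed in \ref{ss:dualizing-complexes} gives $\Rhom R{\ddual M}E\simeq\Sigma^{d}\rgam{\fm}M$, hence $\Hom R NE\simeq\Sigma^{d}\rgam{\fm}M$, so $\rgam{\fm}M\simeq\Sigma^{-d}\Hom R NE$ is concentrated in cohomological degree $d$. Since $\hh{k\lotimes R M}$ consists of $k$-vector spaces, and so is $\fm$-power torsion, the natural morphisms are quasi-isomorphisms
\[
k\lotimes R M\;\xleftarrow{\ \simeq\ }\;\rgam{\fm}{k\lotimes R M}\;\xrightarrow{\ \simeq\ }\;k\lotimes R\rgam{\fm}M\,,
\]
the second by \eqref{eq:lch-tensor}; combining this with the previous line gives $k\lotimes R M\simeq\Sigma^{-d}(k\lotimes R\Hom R NE)$. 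Passing to $\CH 0$, and using the standard isomorphism $\operatorname{Tor}^R_d(k,\Hom R NE)\cong\Hom R{\Ext dRkN}E$ (valid because $k$ is finitely generated), yields
\[
\CH 0{k\lotimes R M}\;\cong\;\Hom R{\Ext dRkN}E\,,
\]
a finite-dimensional $k$-vector space, hence Matlis reflexive. On the other side, \eqref{eq:duality} applied to $N$ in degree $0$ gives $\Hom R{\CH 0{\ddual N}}E\cong\lch d{\fm}N$, and $\CH 0{\ddual N}=\CH 0 M$ since $\ddual N\simeq M$; thus $\Hom R{\CH 0 M}E\cong\lch d{\fm}N$.

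Now apply $(-)^{\vee}$ to the map $\CH 0 M\to\CH 0{k\lotimes R M}$ of condition (2). Its target is reflexive, so passing to the Matlis dual loses no information, and by the two identifications above this dual is a map $\Ext dRkN\to\lch d{\fm}N$. The heart of the matter is to check that it coincides, up to a unit of $R$, with the map $\zeta^d_N$ of \eqref{eq:theone}. This is the step I expect to be the main obstacle: it requires unwinding simultaneously the construction of the local duality isomorphism in \ref{ss:dualizing-complexes}, the definition of $\zeta^d_N$, and the chain of natural quasi-isomorphisms identifying $k\lotimes R M$ with $\Sigma^{-d}(k\lotimes R\Hom R NE)$, and then verifying that the resulting diagram of natural maps commutes---ultimately a compatibility between the local cohomology--completion adjunction \eqref{eq:GM} and the evaluation map $\Rhom R k{-}\to\Rhom R R{-}$ induced by the surjection $R\to k$. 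Granted this identification, the exactness and faithfulness of Matlis duality show that the map of condition (2) is nonzero exactly when $\zeta^d_N$ is nonzero; together with the reduction above, this proves the Proposition.
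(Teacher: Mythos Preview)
Your overall strategy coincides with the paper's: use local duality \eqref{eq:duality} to translate condition~(3) into $\ddual M\simeq N$, and then argue that the Matlis dual of the map in condition~(2) is $\zeta^d_N$. The part you flag as ``the main obstacle'' is indeed the only substantive step, and you leave it genuinely open; moreover, your diagnosis of what is needed to close it (a compatibility with the Greenlees--May adjunction \eqref{eq:GM}) is off the mark. No such compatibility is required.

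The paper closes the gap by a small change of viewpoint that makes the identification immediate: compute $\rgam{\fm}N$ rather than $\rgam{\fm}M$. Since $N\simeq\ddual M=\Rhom RMD$, one has
\[
\rgam{\fm}N\;\simeq\;\rgam{\fm}{\Rhom RMD}\;\simeq\;\Rhom RM{\rgam{\fm}D}\;\simeq\;\Sigma^{-d}\Hom RME\,.
\]
Now $\zeta^d_N$ is, by its very definition \eqref{eq:theone}, the degree-$d$ component of the map $\Rhom Rk{\rgam{\fm}N}\to\rgam{\fm}N$ induced by the surjection $R\to k$. Under the displayed identification this is (up to a shift) the map $\Rhom Rk{\Hom RME}\to\Hom RME$, and ordinary Hom--tensor adjunction rewrites the source as $\Rhom R{k\lotimes RM}E$ and the map as the one induced by $M\to k\lotimes RM$. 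Thus $\zeta^d_N$ is exactly the Matlis dual of $\CH 0M\to\CH 0{k\lotimes RM}$, with no diagram chase beyond the naturality of adjunction. Your route through $\rgam{\fm}M$ and $k\lotimes R\Hom RNE$ can be pushed through, but it forces you to unwind two dualities at once; working with $\rgam{\fm}N$ keeps everything on one side and reduces the ``obstacle'' to a one-line adjunction.
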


\begin{proof}
 Given the hypothesis on the local cohomology on $M$, it follows that $\CH i{\ddual M}$ is nonzero for $i\ne 0$ and hence ${\ddual M}\simeq N$. Moreover, this quasi-isomorphism yields
\[
\rgam{\fm}N\simeq \rgam{\fm}{\Rhom RMD} \simeq \Rhom RM{\rgam{\fm}D} \simeq \Sigma^{-d} \Hom RME\,.
\]
Therefore the map \eqref{eq:theone} is induced by (to be precise, the degree $0$ component of the map in cohomology induced by) the map
\[
\Rhom Rk{\Hom RME} \lra \Hom RME
\]
By adjunction, the map above is
\[
\Rhom R{k\lotimes R M} E \lra \Hom RME
\]
That is to say, \eqref{eq:theone} is the Matlis dual of the map $\CH 0M\to \CH 0{k\lotimes RM}$. This justifies the claims.

Clearly, these steps are reversible: if $N$ is a finitely generated $R$-module such that the map \eqref{eq:theone} is nonzero, the $R$-complex $\Rhom RND$ is MCM.
\end{proof}

Here then is a way (and the only way) to construct MCM complexes when $R$ has a dualizing complex: Take a finitely generated $R$-module $N$ for which  $\zeta^d_N$ is nonzero; then the  complex $\Rhom RND$ is MCM.  It thus becomes important to understand the class of finitely generated $R$-modules for which the map $\zeta^d_N$ is nonzero.

To that end let $F$ be a minimal free resolution of $k$, and set
\[
\syzk\colonequals \Coker(F_{d+1}\to F_d)\,;
\]
this is the $d$th syzygy module of $k$. Since minimal free resolutions are isomorphic as complexes, this $\syzk$ is independent of the choice of resolution, up to an isomorphism. The canonical surjection $F\to F_{\ges d}$ gives a morphism in $\dcat R$:
\begin{equation}
\label{eq:ve}
\varepsilon \colon k\lra \shift^d \syzk\,.
\end{equation}
We view it as an element in $\Ext dRk{\syzk}$. The map $\zeta^d_{\syzk}$ below is from \eqref{eq:theone}.

\begin{lemma}
\label{le:upsilon}
One has  $\zeta^d_{\syzk}(\varepsilon)=0$ if and only if  $\zeta^d_{\syzk}=0$ if and only if $\zeta^d_N=0$ for all $R$-modules $N$.
\end{lemma}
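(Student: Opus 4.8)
The plan is to isolate the one substantive implication, namely that $\zeta^d_{\syzk}(\varepsilon)=0$ forces $\zeta^d_N=0$ for \emph{every} $R$-module $N$; the remaining implications are trivial (specialize $N=\syzk$ to pass from ``$\zeta^d_N=0$ for all $N$'' to $\zeta^d_{\syzk}=0$, then evaluate at $\varepsilon$ to get $\zeta^d_{\syzk}(\varepsilon)=0$). For a homomorphism $\alpha\colon\syzk\to N$ write $\alpha_*$ for the induced map, both on $\Ext dRk{-}$ and on $\lch d{\fm}{-}$. Two facts do the work. \emph{(i) Naturality:} $\zeta^i$ is a natural transformation $\Ext iRk{-}\to\lch i{\fm}{-}$ of functors on $R$-modules, since each of the three arrows composing \eqref{eq:theone} --- the isomorphism induced by $\rgam{\fm}N\to N$, the map induced by $R\to k$, and the identification $\Ext iRR{\rgam{\fm}N}\cong\lch i{\fm}N$ --- is manifestly natural in $N$; consequently $\zeta^d_N\circ\alpha_*=\alpha_*\circ\zeta^d_{\syzk}$. \emph{(ii) Universality of $\varepsilon$:} the map
\[
\Hom R{\syzk}N\lra\Ext dRkN\,,\qquad \alpha\longmapsto\alpha_*(\varepsilon)\,,
\]
is surjective for every $R$-module $N$.

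Granting (i) and (ii) the lemma follows immediately: if $\zeta^d_{\syzk}(\varepsilon)=0$, then for any $N$ and any $\alpha$ one has $\zeta^d_N(\alpha_*(\varepsilon))=\alpha_*(\zeta^d_{\syzk}(\varepsilon))=0$ by (i), and since the elements $\alpha_*(\varepsilon)$ exhaust $\Ext dRkN$ by (ii), this gives $\zeta^d_N=0$. So the real content is the surjectivity statement (ii).

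To prove (ii), let $F$ be the minimal free resolution of $k$ and split it by brutal truncation into a short exact sequence of complexes $0\to F_{<d}\to F\to F_{\ges d}\to 0$, where $F_{<d}$ is concentrated in homological degrees $0,\dots,d-1$ and $F_{\ges d}$ in degrees $\ges d$. Exactness of $F$ in positive degrees yields $\HH i{F_{\ges d}}=0$ for $i\neq d$ and $\HH d{F_{\ges d}}=\Coker(F_{d+1}\to F_d)=\syzk$, so $F_{\ges d}\simeq\shift^d\syzk$; under $F\simeq k$ and this equivalence the quotient map $F\to F_{\ges d}$ is precisely the morphism $\varepsilon$ of \eqref{eq:ve}. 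Applying $\Hom{\dcat R}{-}{\shift^d N}$ to the triangle $F_{<d}\to k\xrightarrow{\,\varepsilon\,}\shift^d\syzk\to\shift F_{<d}$, and using $\Hom{\dcat R}{\shift^d\syzk}{\shift^d N}=\Hom R{\syzk}N$ together with $\Hom{\dcat R}{k}{\shift^d N}=\Ext dRkN$, one obtains an exact sequence in which the arrow $\alpha\mapsto\alpha_*(\varepsilon)$ appears, with cokernel injecting into $\Hom{\dcat R}{F_{<d}}{\shift^d N}$. But $F_{<d}$ is a bounded complex of projectives in homological degrees $\le d-1$, so $\Rhom R{F_{<d}}N=\Hom R{F_{<d}}N$ is concentrated in cohomological degrees $\le d-1$; hence $\Hom{\dcat R}{F_{<d}}{\shift^d N}=\CH d{\Rhom R{F_{<d}}N}=0$, and $\alpha\mapsto\alpha_*(\varepsilon)$ is surjective.

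The one point that needs care --- really the main obstacle, though a modest one --- is to match the quotient morphism $F\to F_{\ges d}$ coming out of the truncation triangle with the element $\varepsilon\in\Ext dRk{\syzk}$ as defined in \eqref{eq:ve}, and to confirm that precomposition with it realizes the assignment $\alpha\mapsto\alpha_*(\varepsilon)$. Both are routine once one recalls that $\varepsilon$ was constructed exactly as the image in $\dcat R$ of the canonical surjection $F\to F_{\ges d}$, so no genuine difficulty arises.
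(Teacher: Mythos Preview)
Your proof is correct and follows essentially the same approach as the paper: both establish naturality of $\zeta^d$ and the surjectivity of $\alpha\mapsto\alpha_*(\varepsilon)$ from $\Hom R{\syzk}N$ onto $\Ext dRkN$. The only cosmetic difference is that the paper proves surjectivity in one line---any morphism $k\to\shift^d N$ in $\dcat R$ is represented by a chain map $F\to\shift^d N$, which necessarily factors through $F\twoheadrightarrow F_{\ges d}$---whereas you phrase the same observation via the long exact sequence of the truncation triangle and the vanishing of $\Hom{\dcat R}{F_{<d}}{\shift^d N}$.
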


\begin{proof}
Fix an $R$-module $N$. Any map $f$ in $\Hom R{\syzk}N$ induces a map
\[
f_*\colon \Ext dRk{\syzk} \lra \Ext dRkN\,.
\]
Let $F$ be a resolution of $k$ as above, defining $\syzk$. Any map $k\to \shift^d N$ in $\dcat R$ is represented by a morphism of complexes $F\to \shift^d N$, and hence factors through the surjection $F\to F_{\ges d}$, that is to say, the morphism $\varepsilon$. We deduce that any element of $\Ext dRkN$ is of the form $f_*(\varepsilon)$, for some $f$ in $\Hom R{\syzk}N$.

In particular,  $\Ext dRk{\syzk}$ is a generated by $\varepsilon$ as a left module over $\End_R(\syzk)$. This  observation, and the linearity of the $\zeta^d_{\syzk}$ with respect to $\End_R(\syzk)$, yields  $\zeta^d_{\syzk}=0$ if and only if $\zeta^d_{\syzk}(\varepsilon)=0$. Also each $f$ in $\Hom R{\syzk}N$ induces a commutative square
\[
\begin{tikzcd}
\Ext dRk{\syzk} \ar[d,"f_*" swap] \ar[r, "\zeta^d_{\syzk}"] & \lch d{\fm}{\syzk}  \ar[d,"\lch d{\fm}f"] \\
\Ext dRk{N} \ar[r, "\zeta^d_{N}"] & \lch d{\fm}{N}
\end{tikzcd}
\]
Thus if $\zeta^d_{\syzk}=0$ we deduce that $\zeta^d_N(f_*{\varepsilon})=0$. By varying $f$ we conclude from the discussion above that $\zeta^d_N=0$.
\end{proof}

We should record the following result immediately. It is one formulation  of the Canonical Element Theorem; see \cite[(3.15)]{Hochster:1983}. The ``canonical element" in question
is $\zeta^d_{\syzk}(\varepsilon)$; see Lemma~\ref{le:upsilon}.

\begin{theorem}
\label{th:cec}
For any noetherian local ring $R$, one has $\zeta^d_{\syzk}\ne 0$. \qed
\end{theorem}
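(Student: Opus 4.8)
The plan is to deduce this from the existence of big Cohen-Macaulay algebras, Theorem~\ref{th:big}, using Lemma~\ref{le:upsilon} to reduce the claim to producing a \emph{single} $R$-module $N$ with $\zeta^d_N\ne 0$. The candidate is $N\colonequals B$, a big Cohen-Macaulay $R$-algebra; such a $B$ exists by Theorem~\ref{th:big}, and no circularity is introduced since the proofs of that theorem (Hochster in equal characteristic, Andr\'e in mixed characteristic) do not invoke the Canonical Element Theorem. First I would check that $B$, viewed as an $R$-module, has maximal depth in the sense of \ref{ss:max-depth}: some system of parameters $x_1,\dots,x_d$ is a $B$-regular sequence, so $\depth_R B=d$, whence $\lch i{\fm}B=0$ for $i<d$ and $\lch d{\fm}B\ne 0$; moreover $B/(x_1,\dots,x_d)B$ is a nonzero module over the Artinian ring $R/(x_1,\dots,x_d)$ and hence is not killed by the image of $\fm$, so $\fm B\ne B$ and the map $\HH 0B\to\HH 0{k\lotimes RB}=B/\fm B$ is nonzero. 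Condition (1) is automatic for a module.

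Next I would pin down $\rgam{\fm}B$. Since $B$ is an $R$-module, $\Supp_RB\subseteq\spec R$, so Grothendieck vanishing gives $\lch i{\fm}B=0$ for $i>d$ as well; combined with the vanishing below $d$, the complex $\rgam{\fm}B$ has cohomology concentrated in degree $d$, hence $\rgam{\fm}B\simeq\shift^{-d}\lch d{\fm}B$ in $\dcat R$. Then I would compute $\zeta^d_B$ directly from its definition \eqref{eq:theone}. The first map identifies $\Ext dRkB$ with $\Ext dRk{\rgam{\fm}B}=H^d(\Rhom Rk{\shift^{-d}\lch d{\fm}B})=\Hom Rk{\lch d{\fm}B}$, which is the socle of $\lch d{\fm}B$; and the map induced by $R\to k$ becomes, under the identification $\Ext dRR{\rgam{\fm}B}=\lch d{\fm}B$, precisely the inclusion of the socle $\Hom Rk{\lch d{\fm}B}\hookrightarrow\lch d{\fm}B$ (concretely, a homomorphism $k\to\lch d{\fm}B$ is sent to the image of $\bar 1$). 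So $\zeta^d_B$ is nonzero if and only if $\lch d{\fm}B$ has nonzero socle.

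It then remains to note that $\lch d{\fm}B$ is a nonzero $\fm$-power torsion $R$-module---nonzero by the maximal depth of $B$---and that every nonzero $\fm$-torsion module has nonzero socle: if $0\ne t$ and $n\ge 1$ is least with $\fm^nt=0$, then any nonzero element of the submodule $\fm^{n-1}t$ is killed by $\fm$. Hence $\zeta^d_B\ne 0$, and Lemma~\ref{le:upsilon} gives $\zeta^d_{\syzk}\ne 0$.

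The substantive obstacle is entirely in the input: Theorem~\ref{th:big}, the existence of big Cohen-Macaulay algebras over an arbitrary noetherian local ring, is the deep statement that turns the rest into mere derived-category bookkeeping with local cohomology. If one is not allowed to assume Theorem~\ref{th:big}, one is essentially being asked to prove the Canonical Element Theorem from scratch, which is Hochster's theorem and well beyond a short argument; conversely, granting Theorem~\ref{th:big}, everything above is formal, and the only points requiring any care are the verification that $B$ has maximal depth and the identification of $\zeta^d_B$ with the socle inclusion.
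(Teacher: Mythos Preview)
The paper does not prove Theorem~\ref{th:cec}; it merely records it as one formulation of the Canonical Element Theorem and cites \cite{Hochster:1983} (note the \texttt{\textbackslash qed} immediately following the statement). Your proposal, by contrast, supplies a complete and correct deduction from Theorem~\ref{th:big} via Lemma~\ref{le:upsilon}. The identification of $\zeta^d_B$ with the inclusion of the socle of $\lch d{\fm}B$ is valid once $\rgam{\fm}B\simeq\shift^{-d}\lch d{\fm}B$; the vanishing $\lch i{\fm}B=0$ for $i>d$ holds for \emph{any} $R$-module because $\rgam{\fm}$ is computed by a \v{C}ech complex of length $d$ on a system of parameters (a more elementary fact than ``Grothendieck vanishing'', which usually names the bound by $\dim_R M$ for finitely generated $M$, so your terminology is slightly off but the mathematics is fine). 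The socle argument for a nonzero $\fm$-power torsion module is standard and correct even without finite generation.

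Your route is in fact the one implicit in the paper's broader narrative---that big Cohen-Macaulay objects imply the homological conjectures---but you carry it out directly for $\zeta^d$, bypassing the detour the paper sketches elsewhere (maximal-depth complexes $\Rightarrow$ Improved New Intersection Theorem $\Rightarrow$ Canonical Element Theorem via Hochster's equivalences in \cite{Hochster:1983}). What your approach buys is a short, self-contained argument once Theorem~\ref{th:big} is granted; what the paper's citation buys is historical accuracy, since the Canonical Element Theorem long predates Andr\'e's work in mixed characteristic.
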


Here then is first construction of an MCM $R$-complex.

\begin{corollary}
\label{co:MCM=CEC}
If $R$ has a dualizing complex the $R$-complex $\ddual{\syzk}$ is MCM.\qed
\end{corollary}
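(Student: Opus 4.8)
The plan is to read this off directly from Theorem~\ref{th:cec} and the converse direction of Proposition~\ref{pr:mcm-duality}. First observe that $\syzk$, being the $d$th syzygy module of $k$ in a minimal free resolution, is a finitely generated $R$-module; it is nonzero since when $d=\dim R\ge 1$ the residue field $k$ has infinite projective dimension, and when $d=0$ one has $\syzk=k$. In particular $\ddual{\syzk}=\Rhom R{\syzk}D$ has finitely generated cohomology, so Proposition~\ref{pr:mcm-duality} is applicable with $M=\ddual{\syzk}$.

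Next, by the Canonical Element Theorem in the form of Theorem~\ref{th:cec}, the map $\zeta^d_{\syzk}$ is nonzero. The last paragraph of the proof of Proposition~\ref{pr:mcm-duality} records precisely the implication we need: for any finitely generated $R$-module $N$ with $\zeta^d_N\ne 0$, the complex $\Rhom RND$ is MCM. Applying this with $N=\syzk$ yields that $\ddual{\syzk}=\Rhom R{\syzk}D$ is MCM, as claimed.

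There is no real obstacle; the content has been isolated into the two earlier results, and the only thing to check is the (trivial) finite generation and nonvanishing of $\syzk$ so that Proposition~\ref{pr:mcm-duality} applies verbatim. It may be worth remarking explicitly, as the surrounding text already does, that this construction is in fact the \emph{only} way to produce MCM complexes over a ring with a dualizing complex, since by Proposition~\ref{pr:mcm-duality} every MCM complex $M$ satisfies $M\simeq\ddual{N}$ for $N=\CH 0{\ddual M}$ a finitely generated module with $\zeta^d_N\ne 0$.
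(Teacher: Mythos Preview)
Your argument is exactly the paper's: invoke Theorem~\ref{th:cec} to get $\zeta^d_{\syzk}\ne 0$, then apply the converse direction recorded at the end of the proof of Proposition~\ref{pr:mcm-duality}. That is all the paper intends by the bare \qed.

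One small correction to an aside: the claim that ``when $d=\dim R\ge 1$ the residue field $k$ has infinite projective dimension'' is false---any regular local ring of positive dimension is a counterexample. Fortunately this remark is unnecessary: the nonvanishing of $\syzk$ is not needed to apply Proposition~\ref{pr:mcm-duality} (finite generation suffices), and in any case it follows immediately from $\zeta^d_{\syzk}\ne 0$. You can simply delete that sentence.
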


\begin{remark}
\label{re:MCM=CEC}
Suppose $R$ has a dualizing complex. Given Proposition~\ref{pr:mcm-duality} and Lemma~\ref{le:upsilon} it follows that $\ddual {\syzk}$ is MCM if and only there exists \emph{some} $R$-complex $M$ that is MCM.  Therefore, the Canonical Element Theorem, in all its various formulations~\cite{Hochster:1983}, is equivalent to the statement that  $R$ has an MCM R-complex!
\end{remark}

We now describe another way to construct an MCM complex. Let $D$ be a dualizing complex for $R$ and set  $\omega_R \colonequals \CH 0D$;
this is the \emph{canonical module} of $R$.

\begin{lemma}
\label{le:omega}
One has $\zeta^d_{\syzk}\ne 0$ if and only if $\zeta^d_{\omega_R}\ne 0$.
\end{lemma}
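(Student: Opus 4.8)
The plan is to prove the two implications separately. One direction is an immediate consequence of Lemma~\ref{le:upsilon}, and the other rests on a short computation with local duality. Indeed, the implication ``$\zeta^d_{\omega_R}\ne 0 \Rightarrow \zeta^d_{\syzk}\ne 0$'' is just the contrapositive of part of Lemma~\ref{le:upsilon}: if $\zeta^d_{\syzk}=0$ then $\zeta^d_N=0$ for \emph{every} $R$-module $N$, in particular for $N=\omega_R$.

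For the reverse implication I would assume $\zeta^d_{\syzk}\ne 0$, which by Lemma~\ref{le:upsilon} is the same as saying that the element $c\colonequals \zeta^d_{\syzk}(\varepsilon)$ is nonzero in $\lch d{\fm}{\syzk}$. First I would record the natural identification $\CH 0{\ddual N}=\Hom RN{\omega_R}$ valid for any finitely generated $R$-module $N$: since $D$ is concentrated in cohomological degrees $[0,d]$, the canonical truncation triangle $\omega_R\to D\to\tau^{\ge 1}D\to\shift\omega_R$ together with the vanishing of $\Hom{\dcat R}{N}{\tau^{\ge 1}D}$ and $\Hom{\dcat R}{N}{\shift^{-1}\tau^{\ge 1}D}$ (both targets have cohomology concentrated in positive degrees, while $N$ has cohomology only in degree $0$) identifies $\CH 0{\ddual N}=\Hom{\dcat R}{N}{D}$ with $\Hom{\dcat R}{N}{\omega_R}=\Hom RN{\omega_R}$. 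Feeding this into local duality~\eqref{eq:duality} with $i=0$ gives a natural isomorphism $\lch d{\fm}{N}\cong \Hom R{\Hom RN{\omega_R}}{E}$. Specialising to $N=\syzk$, the nonzero element $c$ corresponds to a nonzero $R$-linear functional $\gamma\colon \Hom R{\syzk}{\omega_R}\to E$; I would then choose $g\in \Hom R{\syzk}{\omega_R}$ with $\gamma(g)\ne 0$.

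The conclusion follows by applying naturality twice. The commutative square in the proof of Lemma~\ref{le:upsilon}, applied to $g\colon \syzk\to\omega_R$, gives $\zeta^d_{\omega_R}(g_*\varepsilon)=\bigl(\lch d{\fm}{g}\bigr)(c)$. Naturality of local duality identifies $\lch d{\fm}{g}$, under the isomorphisms above, with the $\Hom R{-}E$-transpose of the precomposition map $(-)\circ g\colon \Hom R{\omega_R}{\omega_R}\to \Hom R{\syzk}{\omega_R}$; hence $\bigl(\lch d{\fm}{g}\bigr)(c)$ corresponds to the functional $\psi\mapsto \gamma(\psi\circ g)$ on $\Hom R{\omega_R}{\omega_R}$, whose value at $\psi=\operatorname{id}_{\omega_R}$ is $\gamma(g)\ne 0$. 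Therefore $\zeta^d_{\omega_R}(g_*\varepsilon)\ne 0$, so $\zeta^d_{\omega_R}\ne 0$, which completes the proof.

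The main point to be careful about is the bookkeeping in the final step: after applying the contravariant functor $\Hom R{-}E$, the map induced by $g$ on top local cohomology becomes the transpose of precomposition with $g$ on the ``$\Hom$-into-$\omega_R$'' modules, and one must pair this correctly against the distinguished element $\operatorname{id}_{\omega_R}\in\Hom R{\omega_R}{\omega_R}$ — this is where the canonical module plays a role that an arbitrary $N$ cannot. Once the identification $\CH 0{\ddual N}=\Hom RN{\omega_R}$ and its naturality in $N$ are in hand, everything else is formal, given Lemma~\ref{le:upsilon} and local duality.
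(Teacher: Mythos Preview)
Your proof is correct and follows essentially the same route as the paper's: both arguments use local duality to convert the nonvanishing of $\zeta^d_{\syzk}$ into the existence of a map $g\colon \syzk\to\omega_R$ (your $g$ is the paper's $f$), and then invoke the naturality square from Lemma~\ref{le:upsilon} to push the nonvanishing over to $\zeta^d_{\omega_R}$. Your write-up is somewhat more explicit in unpacking the identification $\CH 0{\ddual N}\cong \Hom RN{\omega_R}$ and in tracking the distinguished element $\varepsilon$, whereas the paper phrases the same step as ``$E$ is faithfully injective, so choose $\alpha$ with $\alpha\circ\zeta^d_{\syzk}\ne 0$, and then $\alpha$ corresponds via local duality to a map $\syzk\to\omega_R$''; but these are two packagings of the same idea.
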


\begin{proof}
We write $\omega$ for $\omega_R$. Given Lemma~\ref{le:upsilon} we have to verify that if $\zeta^d_{\syzk}\ne 0$, then $\zeta^d_{\omega}\ne 0$. Let $E$ be an injective hull of $k$, the residue field of $R$. Since this is a faithful injective, there exists a map $\alpha\colon \lch d{\fm}{\syzk} \to E$ such that $\alpha\circ \zeta^d_{\syzk}\ne 0$.

It follows from local duality~\ref{eq:duality}, applied to $M=\syzk$, that $\alpha$ is induced by a morphism $f\colon \syzk\to D$; equivalently, an $R$-linear map $f\colon \syzk \to \omega$.  This gives the following commutative diagram
\[
\begin{tikzcd}
\Ext dRk{\syzk} \ar[d,"f_*" swap] \ar[r, "\zeta^d_{\syzk}"] & \lch d{\fm}{\syzk}  \ar[d,"\lch d{\fm}f" swap] \ar[dr,"\alpha"] \\
\Ext dRk{N} \ar[r, "\zeta^d_{\omega}" swap] & \lch d{\fm}{\omega} \ar[r] & E
\end{tikzcd}
\]
Since $\alpha\circ \zeta^d_{\syzk}\ne 0$ we conclude that $\zeta^d_{\omega}\ne 0$, as desired.
\end{proof}

\begin{corollary}
\label{co:omega}
If $R$ has a dualizing complex, the $R$-complex $\ddual{\omega_R}$ is MCM. \qed
\end{corollary}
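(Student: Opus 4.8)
The plan is to apply the reversible direction of Proposition~\ref{pr:mcm-duality}---the assertion that if $N$ is a finitely generated $R$-module with $\zeta^d_N\ne 0$, then $\Rhom RND$ is MCM---to the module $N\colonequals\omega_R$, exactly as in the deduction of Corollary~\ref{co:MCM=CEC} from Theorem~\ref{th:cec}, only with $\omega_R$ in place of $\syzk$. Here $\omega_R=\CH 0D$ is a finitely generated $R$-module, since $\hh D$ is finitely generated, and it is nonzero because $D$ is normalized to have nonzero cohomology in degree $0$. So the only point that needs checking is that the map $\zeta^d_{\omega_R}$ of \eqref{eq:theone} is nonzero, where $d=\dim R$.

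For this I would first invoke Theorem~\ref{th:cec}, one of the standard formulations of the Canonical Element Theorem, which gives $\zeta^d_{\syzk}\ne 0$. By Lemma~\ref{le:omega} this is equivalent to $\zeta^d_{\omega_R}\ne 0$, which is what we want. Feeding $N=\omega_R$ back into Proposition~\ref{pr:mcm-duality} then yields that $\ddual{\omega_R}=\Rhom R{\omega_R}D$ is MCM.

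There is no real obstacle: the corollary is a formal consequence of three results already in hand---the Canonical Element Theorem~\ref{th:cec}, the comparison Lemma~\ref{le:omega}, and the duality criterion of Proposition~\ref{pr:mcm-duality}. The only things worth spelling out when writing it up are that $\omega_R$ meets the finite-generation hypothesis of Proposition~\ref{pr:mcm-duality}, and that in the reversible direction of that proposition the MCM conditions (1) and (3) are automatic---condition (1) from boundedness of $\hh{\Rhom R{\omega_R}D}$, and condition (3) from the reflexivity $\ddual{\ddual{\omega_R}}\simeq\omega_R$ combined with the local duality isomorphism~\eqref{eq:duality}---so that $\zeta^d_{\omega_R}\ne 0$, equivalently MCM condition (2), is the only substantive input.
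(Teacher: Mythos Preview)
Your proposal is correct and follows exactly the route the paper intends: the corollary is stated with an immediate \qed because it is a direct consequence of Theorem~\ref{th:cec}, Lemma~\ref{le:omega}, and the reversible direction of Proposition~\ref{pr:mcm-duality}, precisely as you outline. Your additional remarks on finite generation of $\omega_R$ and on why conditions (1) and (3) are automatic are accurate and make explicit what the paper leaves implicit.
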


The preceding result prompts a natural question.

\begin{question}
\label{qu:dualizing-CM}
When is the dualizing complex itself an MCM complex?

\medskip

Let $R$ be a local ring with a dualizing complex $D$, normalized as in \ref{ss:dualizing-complexes}.  The local cohomology of $D$ has the right properties, so, by
Proposition~\ref{pr:mcm-duality}, the $R$-complex $D$ is MCM precisely when  $\zeta^d_R$ is nonzero. Easy examples involving non-domains show that this is not always the case;
Dutta~\cite{Dutta:1994a} asked: \textit{Is $\zeta^d_R$ nonzero whenever $R$ is a complete normal domain?} Recently, Ma, Singh, and Walther~\cite{Ma/Singh/Walther:2020} constructed counterexamples.

On the other hand, when $R$ is \emph{quasi-Gorenstein}, that is to say, when $\omega_R$ is free, it follows from Corollary~\ref{co:omega} that $D$ is MCM.
\end{question}

Here is a broader question, also of interest, concerning the maps $\zeta^i_N$: It is easy to check that this is nonzero when $i=\depth_RN$. What conditions on $N$ ensure that this is the only $i$ for which it is true? By taking direct sums of modules of differing depths, we obtain modules $N$ with $\zeta^i_N$ nonzero for more than a single $i$.

\begin{example}
When $(R,\fm,k)$ is a regular local ring  and $N$ is a finitely generated $R$-module, then $N$ is Buchsbaum if and only if $\zeta^i_N$ is surjective for each $i < \dim_RN$. So any non-CM Buchsbaum $R$-module would  give an example.
\end{example}

\begin{remark}
\label{re:zeta-test}
Let $F\simeq k$ be a free resolution of $k$ and $\ul r\colonequals r_1,\dots,r_n$ elements such that $(\ul r)$ is primary to the maximal ideal. The canonical surjection $R/(\ul r)\to k$ lifts to a morphism of complexes $\kos {\ul r}R\to F$. Applying $\Hom R-N$ induces maps
\[
\Ext iRkN \lra \CH i{\ul r;N}
\]
It is easy to verify that $\zeta^i_N$ factors through this map. What is more, if $\ul s$ is another sequence of elements such that $\ul r\in (\ul s)$, then the map above factors as
\[
\Ext iRkN \lra \CH i{\ul s;N} \lra \CH i{\ul r;N}
\]
Thus if any of maps above are zero, so is $\zeta^i_N$.
\end{remark}

We would like to record a few more observations about MCM complexes.

\begin{remark}
\label{re:MCM=CEC2}
Let $(R,\fm,k)$ be an arbitrary noetherian local ring. Then its $\fm$-completion, $\widehat{R}$, has a dualizing complex, and hence an MCM $\widehat{R}$-complex, as discussed above. Since any MCM $\widehat{R}$-complex is a big Cohen-Macaulay complex over $R$, we conclude that $R$ has a big Cohen-Macaulay complex, and, in particular, a complex of maximal depth.
\end{remark}

\begin{remark}
\label{re:localization}
Assume $R$ has a dualizing complex and that $M$ is an MCM $R$-complex. It is easy to check using Proposition~\ref{pr:mcm-duality} that $M_{\fp}$ is an MCM $R_\fp$-complex for $\fp$ in $\spec R$, as long as condition (2) defining MCM complexes holds at $\fp$. For example, if $A$ is dg  $R$-algebra that is MCM as an $R$-complex, then since $A_\fp$ is a dg $R_\fp$-algebra, Lemma~\ref{le:mcm-dga} implies that it is an MCM $R_\fp$-complex.
\end{remark}

\begin{remark}
\label{re:mcm-module}
While MCM complexes have their uses, as the discussion in Section~\ref{se:max-depth} makes clear, they are not always a good substitute for MCM \emph{modules}. Indeed, in \cite[\S3]{Hochster:1975} Hochster proves if every local ring has an MCM module, then the Serre positivity conjecture on multiplicities is a consequence of the vanishing conjecture; see also \cite[\S4]{Hochster:2017}. Hochster's arguments cannot be carried out with MCM complexes in place of modules. The basic problem is this: Given a finite free complex $F$, over a local ring $R$, with homology of finite length, if $M$ is an MCM $R$-module, then $\hh{F\otimes_RM}$ is concentrated in at most one degree; this need not be the case when $M$ is an MCM complex. Indeed this is clear from Iversen's Amplitude inequality~\cite{Iversen:1977}, which is a reformulation of the New Intersection Theorem, and reads:
\[
\mathrm{amp}(F\lotimes RX) \ge \mathrm{amp}(X)
\]
where $F$ is any finite free complex with $\hh F\ne 0$ and $X$ is an $R$-complex with $\hh X$ bounded. Here $\mathrm{amp}(X)\colonequals \sup\HH{*}X - \inf \HH{*}X$, the \emph{amplitude} of $X$. By the way, the Amplitude Inequality holds even when $\hh X$ is unbounded~\cite{Foxby/Iyengar:2001}.
\end{remark}

\subsection{Via resolution of singularities}
\label{ss:resolve-singularities}
The constructions of MCM complexes described above are independent of the characteristic of the ring, but proving that they are MCM is a non-trivial task, for it depends on knowing
that one has MCM complexes to begin with; see Remark~\ref{re:MCM=CEC}. Next, we describe a complex that arises from a completely different source that one can prove is MCM independently. The
drawback is that it is restricted to algebras essentially of finite type and containing the rationals.  We first record a well-known observation about proper maps.

\begin{lemma}
\label{le:cdga}
Let $R$ be any commutative noetherian ring and $\pi\colon X\to \spec(R)$ a proper map from a noetherian scheme $X$. Viewed as an object in $\dcat R$ the complex $\rder \pi\mcO_X$ is equivalent to a dg algebra with cohomology graded-commutative and finitely generated. When $R$ contains a field of characteristic zero, the dg algebra itself can be chosen to be graded-commutative.
\end{lemma}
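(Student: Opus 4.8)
The plan is to reduce the statement to two well-known facts: that the derived pushforward of the structure sheaf along a proper map has finitely generated (and bounded-below) cohomology, and that a suitable model for this pushforward carries a multiplicative structure. First I would recall that since $\pi$ is proper and $X$ is noetherian, $\rder\pi\mcO_X$ lies in $\mathrm{D^{b}}(\mathrm{mod}\,R)$: the coherence of the higher direct images $\mathrm{R}^i\pi_*\mcO_X$ is a standard theorem (Grothendieck's finiteness theorem / EGA III), and boundedness follows from the fact that cohomology of coherent sheaves on a noetherian scheme vanishes above the dimension of the fibers. This takes care of the ``finitely generated'' clause and tells us the complex is cohomologically bounded.

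Next I would produce the dg algebra structure. The key point is that $\mcO_X$ is a sheaf of commutative rings, so if we replace it by a sheaf of commutative dg $\mcO_X$-algebras that is acyclic in positive degrees and injective (or flasque, or more precisely adapted to $\pi_*$) in each degree, then applying $\pi_*$ termwise yields a dg $R$-algebra whose cohomology is $\hh{\rder\pi\mcO_X}$. Concretely, one can take the Godement resolution, or a K-injective resolution of $\mcO_X$ in the category of sheaves of modules that is multiplicative; over any base this only gives an $E_\infty$- or $A_\infty$-structure in general, but that is already enough to say $\rder\pi\mcO_X$ is ``equivalent to a dg algebra'' in the weak sense, and the induced product on cohomology is the usual cup product, which is graded-commutative because it comes from the commutative product on $\mcO_X$. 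To get an honestly graded-commutative dg algebra model when $R$ (hence $\mcO_X$) contains $\bbQ$, I would invoke the standard fact that over a base of characteristic zero every $E_\infty$-algebra is quasi-isomorphic to a commutative dg algebra: one can, for instance, use the Thom--Sullivan/Navarro Aznar functor of simplicial commutative differential forms, or the Bousfield--Gugenheim rectification, applied to the cosimplicial commutative ring obtained from the Godement resolution. This produces a strictly commutative dg $R$-algebra quasi-isomorphic to $\rder\pi\mcO_X$.

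I expect the main obstacle to be purely expository rather than mathematical: pinning down a clean, citable construction of the multiplicative resolution and of the rectification to a strictly commutative model in characteristic zero, without getting drawn into the technicalities of $E_\infty$-operads. In a survey it would suffice to cite Navarro Aznar's Thom--Whitney construction (or Hinich, or the treatment in the literature on rational homotopy theory of schemes) for the commutative model, and to note that the Godement resolution is functorial and carries a natural (possibly non-associative, but associative up to coherent homotopy) product coming from the Alexander--Whitney / shuffle maps, with the shuffle product being strictly commutative after $\otimes\bbQ$. The compatibility of all these structures with $\pi_*$ is automatic because $\pi_*$ is lax monoidal. No deep input beyond Grothendieck finiteness and rational rectification is needed.
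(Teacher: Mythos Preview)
Your proposal is correct and follows essentially the same strategy as the paper: invoke Grothendieck's finiteness theorem for the coherence of $\rder\pi\mcO_X$, produce a cosimplicial commutative ring computing it, pass to a dg algebra via Alexander--Whitney, and in characteristic zero rectify to a strictly graded-commutative model.

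The one difference worth noting is the choice of model. You reach for the Godement resolution and the language of $E_\infty$-algebras and rectification; the paper instead takes a finite affine open cover $\{U_i\}$ of $X$ (available since $\pi$ is proper, hence $X$ is separated and quasi-compact) and writes down the \v{C}ech cosimplicial ring $[p]\mapsto \prod \Gamma(U_{i_0}\cap\cdots\cap U_{i_p},\mcO_X)$ directly. This is more elementary and avoids any mention of operads or $E_\infty$-structures: the Alexander--Whitney map already makes the normalized complex an honest (associative) dg algebra with graded-commutative cohomology, and for the characteristic-zero statement one can appeal to the same Thom--Sullivan/Navarro Aznar construction you cite, applied to this explicit cosimplicial ring. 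Your Godement route is more functorial and works without separatedness hypotheses, but for a survey the \v{C}ech model is cleaner to state and sidesteps the expository obstacle you anticipated.
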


\begin{proof}
By \cite[Theorem~3.2.1]{Grothendieck:ega3a},  since $\mcO_X$ is coherent and $\pi$ is proper, $\rder\pi \mcO_X$ is coherent and hence its cohomology is finitely generated. Next, we explain why this complex is equivalent, in $\dcat R$, to a dg algebra. The idea is that $\mcO_X$ is a ring object in $\dcat{X}$ and there is a natural morphism
\[
\rder\pi\mathcal{F}\lotimes R\rder\pi{\mathcal{G}}\lra \rder\pi{(\mathcal{F}\lotimes X{\mathcal G})}
\]
so $\rder\pi \mcO_X$ is ring object in $\dcat R$.  One can realize this concretely as follows.

Let $\{U_i\}_{i=1}^n$ be an affine cover of $X$. Then the \v{C}ech complex computing $\rder\pi \mcO_X$ is equivalent to the total complex associated to the co-simplicial commutative ring
\[
\begin{tikzcd}
0\ar[r] & \prod_i \Gamma(X, U_i) \ar[r,shift left=1ex]\ar[r,shift right=1ex]
	& \prod_{i,j}\Gamma(X, U_i\cap U_j) \ar[r,shift left=1ex] \ar[r] \ar[r,shift right=1ex]
	& \prod_{i,j,k}\Gamma(X, U_i\cap U_j\cap U_k)
\end{tikzcd}
\]
It remains to point out that the Alexander-Whitney map makes the normalization of a co-simplicial ring a dg algebra, with graded-commutative cohomology. Moreover, over a field of characteristic zero, it is even quasi-isomorphic to a graded-commutative dg algebra.
\end{proof}

The statement of the next result, which is due to Roberts~\cite{Roberts:1980},  invokes the resolution of singularities in characteristic zero, established by Hironaka. The proof uses Grothendieck duality for projective maps~\cite{Hartshorne:1966} and the theorem of Grauert and Riemenschneider ~\cite{Grauert/Riemenschneider:1970} on the vanishing of cohomology. Given these, the calculation that is needed is standard; see the proof of \cite[Proposition~2.2 ]{Hartshorne/Ogus:1974} due to Hartshorne and Ogus. It will clear from the argument that it can be carried out also in any context where one has sufficient vanishing of cohomology; see \cite[Theorem~3.3]{Roberts:1980}.

\begin{proposition}
\label{pr:resolve-singularities}
Let $(R,\fm,k)$ be a noetherian local ring essentially of finite over a field of characteristic zero, and  $\pi\colon X \to \spec(R)$ a resolution of singularities. The $R$-complex $\rder\pi \mcO_X$ is MCM and equivalent to a graded-commutative dg algebra.
\end{proposition}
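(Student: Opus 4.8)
The plan is to apply Proposition~\ref{pr:mcm-duality}, taking for $D$ the normalized dualizing complex of $R$ as in \S\ref{ss:dualizing-complexes} and writing $M\colonequals\rder\pi\mcO_X$. By that proposition, once we know $\hh M$ is finitely generated it suffices to verify two things: that $\ddual M$ is quasi-isomorphic to the single module $N\colonequals\CH 0{\ddual M}$, and that the map $\zeta^d_N$ is nonzero, where $d\colonequals\dim R$. The finiteness of $\hh M$ — and hence condition (1) of \S\ref{ss:mcm-complex} and the dg-algebra assertion — is immediate from Lemma~\ref{le:cdga}: since $R$ contains $\bbQ$, the complex $M$ is equivalent to a graded-commutative dg $R$-algebra with finitely generated cohomology, concentrated in cohomological degrees $0$ through $d$. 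Granting the dg-algebra structure, the condition $\zeta^d_N\neq 0$ — which, by the proof of Proposition~\ref{pr:mcm-duality}, is equivalent to the map $\CH 0M\to\CH 0{k\lotimes RM}$ being nonzero, i.e.\ to condition (2) — follows from Lemma~\ref{le:mcm-dga}: $\inf\HH{*}M$ is finite and the corresponding bottom cohomology module is finitely generated, so the unit of $M$ is nonzero in $\HH 0{M\lotimes Rk}$; and since $M\to M\lotimes Rk$ is a morphism of dg algebras, it sends the unit of $\CH 0M=H^0(X,\mcO_X)$ to that nonzero class.

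What remains — the heart of the matter — is that $\ddual M$ is concentrated in cohomological degree $0$; equivalently, by local duality~\eqref{eq:duality}, that $\lch i\fm M=0$ for $i\neq d$. Here is how I would argue. Grothendieck duality for the proper (and, as Hironaka's resolution may be taken, projective) morphism $\pi$ gives a natural isomorphism in $\dcat R$
\[
\ddual M=\Rhom R{\rder\pi\mcO_X}{D}\simeq\rder\pi\Rhom X{\mcO_X}{\pi^{!}D}=\rder\pi{\pi^{!}D}\,.
\]
Because $X$ is regular — hence, the ground field having characteristic zero, smooth — of dimension $d$, and $\pi$ is birational, the complex $\pi^{!}D$ is a dualizing complex for $X$; and with the normalization of $D$ fixed in \S\ref{ss:dualizing-complexes} a short computation with $\pi^{!}$ (writing $D\simeq\Sigma^{-d}f^{!}k$ for the structure map $f\colon\spec R\to\spec k$, so that $\pi^{!}D\simeq\Sigma^{-d}g^{!}k$ for the structure map $g\colon X\to\spec k$, and using $g^{!}k\simeq\Sigma^{d}\omega_X$) identifies $\pi^{!}D$ with the canonical sheaf $\omega_X$ of the smooth variety $X$, placed in cohomological degree $0$. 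Hence $\ddual M\simeq\rder\pi\omega_X$, and it remains to prove $R^{j}\pi_*\omega_X=0$ for all $j>0$ — which is precisely the Grauert--Riemenschneider vanishing theorem, available because $X$ is smooth and we are in characteristic zero. Thus $\ddual M\simeq\pi_*\omega_X$, a module in degree $0$, and we are done. (Geometrically the same computation reads $\rgam\fm M\simeq\mathrm{R}\Gamma_{Z}(X,\mcO_X)$ for the fibre $Z=\pi^{-1}(\fm)$, with the claim that this is concentrated in degree $d$; dualizing on the smooth $X$ and invoking Grauert--Riemenschneider is again the mechanism, as in the proof of \cite[Proposition~2.2]{Hartshorne/Ogus:1974}.)

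I expect the only genuinely substantial point to be this last reduction: identifying $\pi^{!}D$ with the canonical sheaf $\omega_X$ sitting in a single cohomological degree, and then applying Grauert--Riemenschneider. This is where smoothness of $X$ and the characteristic-zero hypothesis are essential — both fail in general over fields of positive characteristic — and it is also where the normalization conventions for dualizing complexes have to be handled with care, along the lines of \cite{Roberts:1980} and \cite{Hartshorne/Ogus:1974}. The remaining ingredients are formal, given Lemmas~\ref{le:cdga} and \ref{le:mcm-dga} and the local-duality formalism of \S\ref{ss:dualizing-complexes}; the only bookkeeping points to keep in mind are that $\pi$ may be chosen projective, so that Grothendieck duality in the form of \cite{Hartshorne:1966} applies, and that $\spec R$ should be equidimensional of dimension $d$ for $\pi^{!}D$ to be an honest line bundle concentrated in a single cohomological degree.
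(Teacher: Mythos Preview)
Your proposal is correct and follows essentially the same route as the paper: Lemmas~\ref{le:cdga} and \ref{le:mcm-dga} dispatch the dg-algebra structure, finite generation, and condition~(2), and the substantive step is the chain Grothendieck duality $\Rightarrow$ $\pi^{!}D\simeq\omega_X$ $\Rightarrow$ Grauert--Riemenschneider. The only cosmetic difference is that you package the local-cohomology vanishing as ``$\ddual M$ is concentrated in degree~$0$'' via Proposition~\ref{pr:mcm-duality}, whereas the paper computes $\lch j{\fm}{\rder\pi\mcO_X}$ directly from local duality~\eqref{eq:duality}; these are the same computation read through~\eqref{eq:duality}.
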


\begin{proof}
Given Lemmas~\ref{le:cdga} and ~\ref{le:mcm-dga} it  remains to verify that $\lch j{\fm}{\rder\pi \mcO_X} = 0$ for  $j \ne d$, where $d\colonequals\dim R$.  Let $\kappa \colon \spec(R)\to\spec(k)$ be the structure map and  set $D\colonequals \kappa^!(\mcO_k)$; this is then a dualizing complex for $R$, and $\pi^!D=\pi^!\kappa^!(\mcO_k)\cong \omega_X$, the dualizing sheaf for $X$. Since the $R$-complex $\rder\pi \mcO_X$ has finitely generated cohomology,  local duality~\ref{eq:duality} yields the first isomorphism below
\begin{align*}
\lch j{\fm}{\rder\pi O_X}
	&\cong {\Ext{d-j}R{\rder\pi \mcO_X}{D}}^\vee \\
	& \cong \Ext{d-j}X{\mcO_X}{\pi^! D}^\vee \\
	& =  {\CH{d-j}{X, \omega_X}}^\vee
\end{align*}
The second isomorphism is by coherent Grothendieck duality~\cite{Hartshorne:1966}. It remains to invoke the Grauert-Riemenschneider vanishing theorem~\cite{Grauert/Riemenschneider:1970} to deduce that the last module in the display is $0$ for all $j\ne d$.
\end{proof}

Here is a natural question, growing out of Proposition~\ref{pr:resolve-singularities}. A positive answer might have a bearing on the theory of multiplier ideals; see Theorem~\ref{th:multiplier}.

\begin{question}
\label{qu:cdga}
When $R$ contains a field of positive characteristic, or is of mixed characteristic, does it have an MCM $R$-complex that is also a dg algebra? What about a graded-commutative dg algebra?
\end{question}

\section{Applications to birational geometry}
\label{se:birational}

In this section we prove two celebrated results in birational geometry using MCM complexes constructed via Proposition~\ref{pr:resolve-singularities}. The first one generalizes Boutot's theorem on rational singularities \cite{Boutot:1987}; the argument is only a slight reworking of Boutot's proof, emphasizing the role of the derived push-forward as an MCM complex. Related circles of ideas can be found in the work of Bhatt, Koll\'ar, Kov\'acs, and Ma~\cite{Bhatt:2012, KollarShaferavich, Kovacs:2000, Ma:2018}.

\begin{theorem}
\label{th:boutot}
Let $\rho\colon Z\to \spec R$ be a map of schemes essentially of finite type over a field of characteristic $0$ such that $R\to \rder\rho\mcO_Z$ splits in $\dcat R$. If $Z$ has rational singularities, then so does $R$.
\end{theorem}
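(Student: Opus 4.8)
The plan is to verify directly the two conditions defining rational singularities for $R$. Since the assertion is local I would first localize at a prime and assume $(R,\fm,k)$ is local, the hypotheses being preserved. Normality of $R$ is then immediate: a retraction of the canonical map $R\to\rder\rho\mcO_Z$ gives, on passing to $0$th cohomology, a retraction of $R\to\rho_*\mcO_Z=\CH 0{\rder\rho\mcO_Z}$, so $R$ is an $R$-module direct summand of the ring $\rho_*\mcO_Z$; since $Z$ has rational singularities it is normal, hence $\rho_*\mcO_Z$ is normal, and a ring that is a module direct summand of a normal ring is normal. Thus $R$ is a normal local domain. It remains to show that, for a resolution of singularities $\pi\colon Y\to\spec R$, the canonical map $\alpha\colon R\to\rder\pi\mcO_Y$ is an isomorphism in $\dcat R$: then $R\simeq\rder\pi\mcO_Y$, which is MCM by Proposition~\ref{pr:resolve-singularities}, so $R$ is Cohen--Macaulay, $R^i\pi_*\mcO_Y=0$ for $i>0$, and $R$ has rational singularities.

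To split $\alpha$ I would use the rational singularities of $Z$ twice. First, a resolution $\mu\colon W\to Z$ has $\rder\mu\mcO_W\simeq\mcO_Z$, so $\rder{(\rho\mu)}\mcO_W\simeq\rder\rho\mcO_Z$ and the canonical map $R\to\rder{(\rho\mu)}\mcO_W$ still splits; put $g\colonequals\rho\mu\colon W\to\spec R$ with $W$ now smooth. Second, by elimination of indeterminacy in characteristic zero I would pass to a proper birational modification $b\colon V\to W$ with $V$ smooth, chosen so that the rational map $W\dashrightarrow Y$ over $\spec R$ determined by $g$ and $\pi$ (defined wherever $\pi$ is an isomorphism) becomes a morphism $a\colon V\to Y$; then $\pi a=gb$, and we write $h$ for this common composite. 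Since $W$ is smooth, $\rder b\mcO_V\simeq\mcO_W$, whence $\rder h\mcO_V\simeq\rder g\mcO_W\simeq\rder\rho\mcO_Z$, so $R\to\rder h\mcO_V$ splits. Finally, since $h=\pi a$, the canonical map $R\to\rder h\mcO_V\simeq\rder\pi(\rder a\mcO_V)$ is the composite of $\alpha$ with the map $\rder\pi\mcO_Y\to\rder\pi(\rder a\mcO_V)$ induced by $\mcO_Y\to\rder a\mcO_V$; hence $\alpha$ is split injective in $\dcat R$.

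The conclusion comes from local duality. Let $D$ be the dualizing complex of $R$, normalized as in \S\ref{ss:dualizing-complexes}, and $\ddual{(-)}=\Rhom R-D$, a contravariant autoequivalence of $\mathrm{D^{b}}(\mathrm{mod}\,R)$. Dualizing a retraction of $\alpha$ shows $\ddual\alpha\colon\ddual{\rder\pi\mcO_Y}\to D$ is split surjective. Now $\rder\pi\mcO_Y$ is MCM, so by Proposition~\ref{pr:mcm-duality} its dual is concentrated in cohomological degree $0$; and the computation from the proof of Proposition~\ref{pr:resolve-singularities} — coherent Grothendieck duality followed by Grauert--Riemenschneider vanishing — identifies $\ddual{\rder\pi\mcO_Y}$ with the module $\pi_*\omega_Y$ and $\ddual\alpha$ with the trace map $\pi_*\omega_Y\to\omega_R$. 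As a direct summand of a module, $D$ is a module, so $R$ is Cohen--Macaulay and $\ddual\alpha$ is a surjective homomorphism of modules. But the trace map is injective: $R$ is a normal domain and $\pi$ is birational, so $\pi_*\omega_Y$ is torsion-free and $\ddual\alpha$ is an isomorphism over the regular locus of $\spec R$. A surjective injective homomorphism of modules is an isomorphism, and since $\ddual{(-)}$ is an equivalence, $\alpha$ is an isomorphism in $\dcat R$, which finishes the proof.

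The step I expect to be the main obstacle is the last one: recognizing precisely that dualizing turns the cheaply obtained splitting of $\alpha$ into a splitting of the trace map $\pi_*\omega_Y\to\omega_R$, and that it is exactly the MCM property of $\rder\pi\mcO_Y$ — via Proposition~\ref{pr:mcm-duality} together with the Grothendieck-duality and Grauert--Riemenschneider identification — that confines this splitting to the category of modules, where the automatic injectivity of the trace upgrades it to an isomorphism. The geometric bookkeeping in the previous step (producing one smooth $V$ dominating both resolutions and checking that it is $\alpha$, not merely some map $R\to\rder\pi\mcO_Y$, that splits) also requires care, but is routine.
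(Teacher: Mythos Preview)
Your proof is correct and follows essentially the same strategy as the paper: construct a smooth scheme dominating both a chosen resolution of $\spec R$ and $Z$ (the paper does this by resolving a component of the fiber product, you by elimination of indeterminacy), deduce that the canonical map from $R$ to the derived push-forward of the resolution splits, and then use the MCM property of that push-forward together with duality to upgrade the splitting to an isomorphism. The only cosmetic difference is that the paper reads off Cohen--Macaulayness directly from the split injection on local cohomology, whereas you obtain it by noting that $D$ is a derived summand of the module $\pi_*\omega_Y$; these are equivalent via local duality.
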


\begin{proof}
We may assume $(R,\fm)$ is local. Note that the condition implies $R\to\pi_*O_X$ is injective so in particular $R$ is reduced (as $X$ is reduced). Thus we can take $\pi\colon X\to\spec R$ to be a resolution of singularities. Then there is a (reduced) subscheme of $X\times_{\spec R}Z$ that is birational over $Z$ for each irreducible component of $Z$. Let $Y$ be a resolution of singularities of that subscheme. Thus there is a commutative diagram:
\[
\begin{tikzcd}
Y \ar[r] \ar[d, "\sigma"] & X \ar[d, "\pi"] \\
Z \ar[r, "\rho"] & \spec R.
\end{tikzcd}
\]
This induces a commutative diagram
\[
\begin{tikzcd}
R \ar[r] \ar[d] & \rder\rho\mcO_Z \ar[d,"\cong"] \\
\rder\pi\mcO_X \ar[r] & \rder\rho\rder\sigma\mcO_Y.
\end{tikzcd}
\]
The right vertical map is an isomorphism since $Z$ has rational singularities. Now since $R\to \rder\rho\mcO_Z$ splits in $D(R)$, chasing the diagram shows that $R\to \rder\pi\mcO_X$ splits in $\dcat R$. In particular, we know that the induced map
\[
\lch i{\fm}R \hookrightarrow \lch i{\fm}{\rder \pi\mcO_X}
 \]
is split-injective for all $i$. Because $\rder\pi\mcO_X$ is a MCM complex, by Proposition~\ref{pr:resolve-singularities}, it follows that $\lch i{\fm}{R}=0$ for $i<d$, that is to say, $R$ is Cohen-Macaulay. Finally, the Matlis dual of the injection above yields a surjective map $\pi_*\omega_X\twoheadrightarrow \omega_R$. Therefore $\pi_*\omega_X\cong \omega_R$ since $X\to \spec R$ is birational.

Putting these together yields $\omega_R^\bullet\cong \rder \pi\omega_X^\bullet$, where $\omega_R^\bullet$ and $\omega_X^\bullet$ are the normalized dualizing complex of $R$ and $X$ respectively. Applying $\Rhom{R}{-}{\omega_R^\bullet}$ and using Grothendieck duality yields $R\cong \rder \pi\mcO_X$. Thus $R$ has rational singularities.
\end{proof}

Here is an application.

\begin{corollary}
Suppose $(R,\Delta)$ is KLT, then $R$ has rational singularities.
\end{corollary}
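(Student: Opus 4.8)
The statement to prove is that if $(R,\Delta)$ is KLT, then $R$ has rational singularities. The natural strategy is to deduce this from Theorem~\ref{th:boutot} by producing, from the KLT hypothesis, a map $\rho\colon Z\to\spec R$ essentially of finite type over a field of characteristic $0$ with $Z$ having rational singularities and with $R\to\rder\rho\mcO_Z$ split in $\dcat R$. First I would recall the definition of a KLT pair: a $\bbQ$-divisor $\Delta\ge 0$ on $\spec R$ with $K_R+\Delta$ $\bbQ$-Cartier such that on a log resolution $\pi\colon X\to\spec R$ with $\pi^*(K_R+\Delta)=K_X+\sum a_iE_i$ (over a suitable open model), all discrepancy coefficients satisfy $a_i<1$. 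The case $\Delta=0$ reduces the problem to the known fact that KLT (equivalently, for $\Delta=0$, log terminal) singularities are rational—but the cleanest route for the survey is via the splitting criterion.

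**Key steps, in order.** (1) Pass to the local, and then complete, setting; reduce to $R$ essentially of finite type over a field of characteristic zero (this is implicit in the birational setup and in Proposition~\ref{pr:resolve-singularities}). (2) Take a log resolution $\pi\colon X\to\spec R$ of the pair $(R,\Delta)$. (3) Use the KLT condition to choose an effective $\bbQ$-divisor and apply Kawamata–Viehweg vanishing to conclude that $\rder\pi\mcO_X\simeq R$ in $\dcat R$ — equivalently $R^{i}\pi_*\mcO_X=0$ for $i>0$ and $\pi_*\mcO_X=R$ (using that $\spec R$ is normal, which follows from KLT). Concretely: since $-\lceil\Delta'\rceil$-type round-downs of $K_X-\pi^*(K_R+\Delta)$ are effective with small fractional part by the strict inequality $a_i<1$, local vanishing for KLT pairs gives $R^i\pi_*\mcO_X=0$ for $i>0$. (4) In particular the identity map factors $R\to\rder\pi\mcO_X\xra{\ \sim\ }R$, so taking $\rho=\pi$ and $Z=X$, the hypothesis $R\to\rder\rho\mcO_Z$ splits in $\dcat R$ is satisfied trivially (it is an isomorphism), and $Z=X$ is smooth, hence has rational singularities. (5) Apply Theorem~\ref{th:boutot} to conclude $R$ has rational singularities.

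**The main obstacle.** The only substantive input is Step (3): the vanishing $R^i\pi_*\mcO_X=0$ for $i>0$, i.e. that the derived pushforward of the structure sheaf of a log resolution of a KLT pair is a module in degree $0$. This is precisely a form of local Kawamata–Viehweg vanishing for KLT pairs (sometimes packaged as ``KLT $\Rightarrow$ pseudo-rational/rational''), and it is where the inequality $a_i<1$ (as opposed to $a_i\le 1$, the log canonical case) is essential. In a survey one would simply cite this; the work is in setting up the discrepancy bookkeeping so that Kawamata–Viehweg applies to $K_X-\pi^*(K_R+\Delta)$ with the appropriate round-up. Everything else — normality of $R$, reduction to the local complete case, and the final invocation of Theorem~\ref{th:boutot} — is formal. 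An alternative, avoiding any appeal to Theorem~\ref{th:boutot}, would be to observe directly from Step (3) that $\rder\pi\mcO_X\simeq R$ already says $R$ is rational; but phrasing it through the splitting theorem keeps the corollary uniform with the birational-geometry theme of the section.
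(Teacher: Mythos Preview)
Your overall strategy---take a log resolution, invoke Kawamata--Viehweg, then apply Theorem~\ref{th:boutot}---matches the paper's, but Step~(3) as you state it does not go through, and the gap is exactly the content of the argument.

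You claim that Kawamata--Viehweg vanishing yields $R^i\pi_*\mcO_X=0$ for $i>0$, i.e.\ $\rder\pi\mcO_X\simeq R$. But Kawamata--Viehweg applies to line bundles of the form $K_X+(\text{klt boundary})+(\pi\text{-nef})$, not to $\mcO_X$ itself; there is no reason the trivial bundle has this shape. Indeed, the assertion $\rder\pi\mcO_X\simeq R$ is \emph{precisely} the conclusion that $R$ has rational singularities, so establishing it directly would make the appeal to Theorem~\ref{th:boutot} superfluous---as you yourself observe at the end. That observation is a signal that Step~(3) is assuming what is to be proved.

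The paper's proof fixes this as follows. With $\pi\colon Y\to\spec R$ a log resolution, the KLT hypothesis ensures that $D\colonequals\lceil K_Y-\pi^*(K_R+\Delta)\rceil$ is effective and $\pi$-exceptional. Writing $D=K_Y+B-\pi^*(K_R+\Delta)$ with $B$ an SNC boundary having coefficients in $[0,1)$, relative Kawamata--Viehweg gives $R^i\pi_*\mcO_Y(D)=0$ for $i>0$, and effectivity plus exceptionality give $\pi_*\mcO_Y(D)=R$; hence $\rder\pi\mcO_Y(D)\cong R$. The effective divisor $D$ induces $\mcO_Y\hookrightarrow\mcO_Y(D)$, so one obtains a factorization
\[
R\lra \rder\pi\mcO_Y\lra \rder\pi\mcO_Y(D)\cong R
\]
of the identity. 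Thus $R\to\rder\pi\mcO_Y$ \emph{splits} (one does not get an isomorphism), and now Theorem~\ref{th:boutot} applies with $Z=Y$. The twist by $D$ is the missing idea in your Step~(3).
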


\begin{proof}
Let $\pi\colon Y\to X=\spec R$ be a log resolution of $(R,\Delta)$. Since $(R, \Delta)$ is KLT, we know that $\lceil K_Y-\pi^*(K_X+\Delta)\rceil$ is effective and exceptional, thus
\[
R=\pi_*\mcO_Y(\lceil K_Y-\pi^*(K_X+\Delta)\rceil)=\rder\pi \mcO_Y(\lceil K_Y-\pi^*(K_X+\Delta)\rceil)\,,
\]
where the second equality follows from relative Kawamata-Viehweg vanishing~\cite[Theorem~9.4.1]{Lazarsfeld:2004}. Now we have
\[
R\to \rder\pi\mcO_Y \to \rder\pi\mcO_Y(\lceil K_Y-\pi^*(K_X+\Delta)\rceil)\cong R\,,
\]
that is to say, the map $R\to\rder\pi\mcO_Y$ splits in $\dcat R$. Theorem \ref{th:boutot} then implies $R$ has rational singularities.
\end{proof}

Our second application is a new proof of the subadditivity property of multiplier ideals \cite{Lazarsfeld:2004}. Our idea of using the MCM property of $\rder\pi\mcO_X$ to prove this comes from the analogous methods in positive and mixed characteristic \cite{Ma/Schwede:2018, Takagi:2006}.

\begin{theorem}
\label{th:multiplier}
Let $(A,\frak{m})$ be a noetherian regular local ring essentially of finite type over a field of characteristic $0$. Given ideals $\frak{a}, \frak{b}$ in $A$ and  numbers $s, t \in \mathbb{Q}_{\geq 0}$, one has $J(A, \frak{a}^s\frak{b}^t)\subseteq J(A, \frak{a}^s)J(A, \frak{b}^t)$.
\end{theorem}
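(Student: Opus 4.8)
The strategy is to reduce the subadditivity statement to a cohomological property of the MCM complex $\rder\pi\mcO_X$ attached to a suitable resolution, mirroring the positive and mixed characteristic arguments of Takagi and Ma--Schwede. First I would fix log resolutions: choose $\pi_{\frak a}\colon X_{\frak a}\to \spec A$ with $\frak a\mcO_{X_{\frak a}}=\mcO(-F)$ for a SNC divisor $F$, similarly $\pi_{\frak b}\colon X_{\frak b}\to\spec A$ for $\frak b$, and a common resolution $\pi\colon X\to\spec A$ dominating both, so that on $X$ both $\frak a\mcO_X$ and $\frak b\mcO_X$ are invertible, say cutting out $F$ and $G$. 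Then by the geometric characterization of multiplier ideals (Lazarsfeld~\cite{Lazarsfeld:2004}) one has
\[
J(A,\frak a^s)=\pi_*\mcO_X(K_{X/A}-\lfloor sF\rfloor),\quad
J(A,\frak b^t)=\pi_*\mcO_X(K_{X/A}-\lfloor tG\rfloor),
\]
and $J(A,\frak a^s\frak b^t)=\pi_*\mcO_X(K_{X/A}-\lfloor sF+tG\rfloor)$.

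**Key steps.** The multiplication map $\mcO_X(-\lfloor sF\rfloor)\otimes\mcO_X(-\lfloor tG\rfloor)\to\mcO_X(-\lfloor sF\rfloor-\lfloor tG\rfloor)\hookrightarrow\mcO_X(-\lfloor sF+tG\rfloor)$, twisted by $\omega_X$, furnishes a map
\[
\pi_*\big(\omega_X(-\lfloor sF\rfloor)\big)\otimes_A\pi_*\big(\omega_X(-\lfloor tG\rfloor)\big)\lra
\pi_*\big(\omega_X\otimes_{\mcO_X}\omega_X^{-1}\otimes\omega_X(-\lfloor sF+tG\rfloor)\big),
\]
but to land in $J(A,\frak a^s\frak b^t)=\pi_*(\omega_X(K_{X/A}-\lfloor sF+tG\rfloor))$ — note $K_{X/A}$ appears once, not twice — one must divide by an extra copy of $\omega_{X/A}$, which is exactly where the regularity of $A$ (so $\omega_A\cong A$, hence $\omega_X\cong\omega_{X/A}$) and the MCM property enter. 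Concretely: set $M\colonequals\rder\pi\mcO_X$, which is MCM by Proposition~\ref{pr:resolve-singularities}, and dualize. Using Grothendieck duality $\Rhom AM{\omega_A^\bullet}\simeq \rder\pi\omega_X^\bullet$ and the fact that $A$ is regular, the module $\pi_*\omega_X$ is, up to the normalization, $\CH 0{\ddual M}$; since $M$ is MCM, $\ddual M$ is a module concentrated in degree $0$, so $\pi_*(\omega_X\otimes L)$ for the relevant line bundles $L$ are honest modules with no higher direct images interfering (this is Grauert--Riemenschneider plus Kawamata--Viehweg vanishing applied on $X$). I would then produce the containment $J(A,\frak a^s\frak b^t)\subseteq J(A,\frak a^s)J(A,\frak b^t)$ by dualizing the surjection-type statement: apply $\Hom A{-}{E}$ (Matlis duality) to convert the desired containment of submodules of $A$ into a surjectivity of maps between local cohomology modules $\lch d{\fm}{-}$ of the relevant twists of $\rder\pi\mcO_X$, where again the vanishing of $\lch i\fm{}$ for $i<d$ — i.e. the MCM property — is what allows the diagram chase to close. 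The trace map $\rder\pi\omega_X^\bullet\to\omega_A^\bullet$ and its compatibility with tensor products provides the pairing.

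**Main obstacle.** The crux is bookkeeping the canonical/relative-canonical twists so that the ``extra'' $\omega_{X/A}$ can be cancelled — this is precisely the step that forces $A$ to be regular and is the reason a direct sheaf-theoretic multiplication does not immediately give subadditivity. Making this rigorous means identifying $J(A,\frak a^s\frak b^t)$ with $\CH 0$ of a dual of a twist of the MCM complex $M=\rder\pi\mcO_X$, and then checking that the multiplication map on $X$ is dual to a map of $A$-modules that factors through the product $J(A,\frak a^s)J(A,\frak b^t)$. I expect the diagram chase converting ``$\ddual M$ is a module'' (MCM) into ``the relevant $\lch{>d-1}\fm$ vanish so the trace pairing is surjective onto $\omega_A\cong A$'' to be the technical heart; the vanishing inputs (Grauert--Riemenschneider, relative Kawamata--Viehweg~\cite[Theorem~9.4.1]{Lazarsfeld:2004}) and the geometric formula for multiplier ideals are standard and can be cited. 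Once the identifications are set up correctly, the containment falls out of applying Matlis duality to a visibly commutative square on $X$.
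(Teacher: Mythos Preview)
Your proposal identifies the right circle of ideas---MCM property of $\rder\pi\mcO_X$, Matlis duality, local cohomology---and correctly pinpoints the obstacle: when you multiply $\mcO_X(K_{X/A}-\lfloor sF\rfloor)$ and $\mcO_X(K_{X/A}-\lfloor tG\rfloor)$ you land in a sheaf with $2K_{X/A}$, not $K_{X/A}$, and since $K_{X/A}$ is effective the natural inclusion $\mcO_X(-K_{X/A})\hookrightarrow\mcO_X$ points the wrong way for what you need. But you do not actually resolve this; the phrases ``divide by an extra copy of $\omega_{X/A}$'', ``trace map compatibility with tensor products'', and ``diagram chase converting `$\ddual M$ is a module' into surjectivity'' are placeholders, not arguments. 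The MCM property of $\rder\pi\mcO_X$ gives vanishing of $\lch i\fm{\rder\pi\mcO_X}$ for $i\ne d$, but this alone does not cancel a twist by $\omega_{X/A}$, and no amount of Matlis duality on $A$ will produce the missing map on $X$.

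The paper's proof handles this by a device absent from your outline: first reduce to principal ideals $\frak a=(f)$, $\frak b=(g)$ via a genericity argument, then pass to the normalization $R$ of $A[f^{1/d_s},g^{1/d_t}]$ and resolve \emph{that} by $\pi\colon X\to\spec R$. Now $f^s$ and $g^t$ are honest elements of $R$, so multiplication by them is an actual endomorphism of $\lch d\fm{\rder\pi\mcO_X}$; one defines $0^r_E\subseteq E=\lch d\fm A$ as the kernel of $E\to\lch d\fm{\rder\pi\mcO_X}\xrightarrow{r}\lch d\fm{\rder\pi\mcO_X}$ and identifies $J(A,f^s)=\ann_A 0^{f^s}_E$ via \cite{BlickleSchwedeTucker}. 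The key inclusion $\{\eta: J(A,f^s)\eta\subseteq 0^{g^t}_E\}\subseteq 0^{f^sg^t}_E$ is then proved by a Hom--tensor swap where the MCM vanishing $\lch{d-i}\fm{\rder\pi\mcO_X}=0$ for $i\ge 1$ is exactly what lets one replace $\Hom A{A/J}{\lch d\fm{\rder\pi\mcO_X}}$ by $h^0(\rder\pi\mcO_X\lotimes A\ann_E J)$. This sidesteps the $\omega_{X/A}$-bookkeeping entirely: there are no canonical twists in sight, only multiplication by ring elements. Without the cyclic-cover step your formal powers $\frak a^s,\frak b^t$ never become operators, and the diagram you hope to chase does not exist.
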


\begin{proof}
We first claim that we may assume that $\frak{a}, \frak{b}$ are both principal ideals. For this we need an analog, for \emph{mixed} multiplier ideals, \cite[Proposition 92.26]{Lazarsfeld:2004}.  The argument is the same and we now sketch it.  Indeed, fix general elements $f_1, \dots, f_k$ in $\frak{a}$ and $g_1, \dots, g_l$ in $\frak{b}$ for $k > s$ and $l > t$ and set
\begin{align*}
D_1 &= {1 \over k}\sum \mathrm{Div}(f_i) = {1 \over k} \mathrm{Div}(\prod \mathrm{Div}(f_i)) \\
 D_2 &= { 1 \over l} \sum \mathrm{Div}(g_i) = {1 \over l} \mathrm{Div}(\prod \mathrm{Div}(g_i))\,.
 \end{align*}
For a log resolution $\pi\colon  X \to \mathrm{Spec} A$ of $(A, \frak{a}, \frak{b})$ with $\mcO_X(-F) = \frak{a} \cdot \mcO_X$ and $\mcO_X(-G) = \frak{b} \cdot \mcO_X$, we have that
\[
    \pi^{-1}_* \mathrm{Div}(f_i) + F_{\mathrm{exc}} = \pi^* \mathrm{Div}(f_i) \;\;\text{ and }\;\; \pi^{-1}_* \mathrm{Div}(g_i) + G_{\mathrm{exc}} = \pi^* \mathrm{Div}(g_i).
\]
where $\pi^{-1}_*$ denotes the strict transform and $F_{\mathrm{exc}}$ and $G_{\mathrm{exc}}$ are the $\pi$-exceptional parts of $F$ and $G$.  Since the $f_i$ and $g_i$ are generic, the associated divisors and their strict transforms are reduced.  A straightforward computation then shows that
\[
    \lfloor sF \rfloor = \left\lfloor {s \over k} \sum \pi^* \mathrm{Div}(f_i) \right\rfloor \;\;\text{ and }\;\;
    	\lfloor tG \rfloor = \left\lfloor {t \over l} \sum \pi^* \mathrm{Div}(g_i) \right\rfloor.
\]
Thus $J(A, \frak{a}^s \frak{b}^t) = J(A, (\prod f_i)^{s/k} (\prod g_i)^{t/l})$, and likewise $J(A, \frak{a}^s) = J(A, (\prod f_i)^{s/k})$ and $J(A, \frak{b}^t) = J(A, (\prod g_i)^{t/l})$.  Therefore we may assume that $\frak{a}$ and $\frak{b}$ are principal.

Now we assume $\frak{a}=(f)$ and $\frak{b}=(g)$. Let $R$ be the normalization of $A[f^{1/d_s}, g^{1/d_t}]$ where $d_s$ and $d_t$ are the denominators of $s$ and $t$; thus $f^s, g^t$ are elements in $R$. Let $\pi\colon X\to \spec R$ be a resolution of singularities. Thus $X\to \spec A$ is a regular alteration; we write $\pi$ also for this map.

In what follows, to simplify notation, we write $E$ for $\lch d{\fm}A$. Given an element $r\in R$ let $0^{r}_{E}$ for the kernel of the composite map
\[
E=\lch d{\fm}A \lra \lch d{\fm} R  \xra{\ r\ } \lch d{\fm}R \lra \lch d{\fm}{\rder\pi\mcO_X}
\]
Now suppose that a power $r^m$ of $r$ lives in $A$ (for instance $r = f^{s}$ or $r = g^{t}$).  Then by \cite[Theorem 8.1]{BlickleSchwedeTucker} we have that $\mathrm{Tr}(J(\omega_R, r)) = J(A, (r^m)^{1/m})$.  By local duality it is easy to see that
\[
J(A,(r^{m})^{1/m}) = \ann_A 0^{r}_{E}\,.
\]
In particular, $J(A, f^s)=\ann_A 0^{f^s}_{E}$ and $J(A, g^t)=\ann_A 0^{g^t}_{E}$.

We next claim that the following inclusion holds:
\begin{equation}
\label{eq:claim}
\{\eta\in E \mid J(A, f^s)\cdot \eta \subseteq 0^{g^t}_{E}\}\subseteq 0^{f^sg^t}_{E}.
\end{equation}
Indeed,  suppose $J(A, f^s)\eta\subseteq 0^{g^t}_{E}$, then $J(A, f^s)\cdot g^t\eta=0$ in $\lch d{\fm}{\rder\pi\mcO_X}$. Note that $g^t\eta$ makes sense in $\lch d{\fm}{\rder\pi\mcO_X}$ as the latter is a module over $R$. Thus
\[
g^t\eta\in \ann_{\lch d{\fm}{\rder\pi\mcO_X}}J(A, f^s)\cong \Hom{A}{A/J(A, f^s)}{\lch d{\fm}{\rder\pi\mcO_X}}\,.
\]
Next, because
\[
h^{-i}(\rder\pi\mcO_X\lotimes A E) \cong \lch {d-i}{\fm}{\rder\pi\mcO_X}=0
\]
for all $i\ge 1$, we know that $ g^t\eta$ is in the module
\begin{eqnarray*}
 \Hom{A}{\frac{A}{J(A, f^s)}}{\lch d{\fm}{\rder\pi\mcO_X}}
   &\cong & h^0\left(\Rhom{A}{\frac{A}{J(A, f^s)}}{\rder\pi\mcO_X\lotimes A E}\right)\\
   &\cong& h^0\left( \rder\pi\mcO_X \lotimes A \Rhom{A}{\frac{A}{J(A,f^s)}}{E}\right)\\
   &\cong& h^0\left( \rder\pi\mcO_X \lotimes A \ann_{E}J(A, f^s)\right)
\end{eqnarray*}
where the second isomorphism follows from \cite[Proposition 1.1 (4)]{Foxby:1977}, noting that $A$ is regular thus every bounded complex is isomorphic to a bounded complex of flat modules in $\dcat A$, and the third isomorphism follows from the fact that $E$ is an injective $A$-module.

Consider following composite map; again, the second multiplication by $f^s$ map makes sense since we can view $\rder\pi\mcO_X$ as a complex over $R$ and not merely over $A$:
\[
E\to h^0\left( \rder\pi\mcO_X \lotimes A {E}\right) \xrightarrow{\cdot f^s} h^0\left( \rder\pi\mcO_X \lotimes A {E}\right)
\]
Its kernel is $\ann_{E}J(A, f^s)$, by Matlis duality. Thus the composition of the natural induced maps
\[
 \rder\pi\mcO_X \lotimes A \ann_{E}J(A, f^s) \to  \rder\pi\mcO_X \lotimes A {E}
 	 \xrightarrow{\cdot f^s}  \rder\pi\mcO_X \lotimes A {E}
\]
is  zero in $h^0$. In particular, since $g^t\eta$ is in $h^0$ of the source of this composite map, we deduce that, viewed as an element in target, namely in
\[
h^0\left( \rder\pi\mcO_X \lotimes A {E}\right)\cong \lch d{\fm}{\rder\pi\mcO_X}
\]
it is killed by $f^s$. Therefore $f^sg^t\eta=0$ in $\lch d{\fm}{\rder\pi\mcO_X}$ and hence $\eta\in 0^{f^sg^t}_{E}$.

This justifies \eqref{eq:claim}.

Finally, for any $z\in \ann_{E}J(A, f^s)J(A, g^t)$, we have $J(A, f^s)z\subseteq 0^{g^t}_{E}$ and thus $z\in 0^{f^sg^t}_{E}$ by \eqref{eq:claim}. Therefore $$\ann_{E}J(A, f^s)J(A, g^t)\subseteq 0^{f^sg^t}_{E}$$ and hence by Matlis duality $J(A, f^sg^t)\subseteq J(A, f^s)J(A, g^t)$.
\end{proof}

\begin{bibdiv}
\begin{biblist}

\bib{Andre:2018}{article}{
   author={Andr\'{e}, Yves},
   title={La conjecture du facteur direct},
   language={French, with French summary},
   journal={Publ. Math. Inst. Hautes \'{E}tudes Sci.},
   volume={127},
   date={2018},
   pages={71--93},
   issn={0073-8301},
   review={\MR{3814651}},
   doi={10.1007/s10240-017-0097-9},
}

\bib{Andre:2020}{article}{
   author={Andr\'{e}, Yves},
   title={Weak functoriality of Cohen-Macaulay algebras},
   journal={J. Amer. Math. Soc.},
   volume={33},
   date={2020},
   number={2},
   pages={363--380},
   issn={0894-0347},
   review={\MR{4073864}},
   doi={10.1090/jams/937},
}

\bib{Avramov/Foxby:1991}{article}{
   author={Avramov, Luchezar L.},
   author={Foxby, Hans-Bj\o rn},
   title={Homological dimensions of unbounded complexes},
   journal={J. Pure Appl. Algebra},
   volume={71},
   date={1991},
   number={2-3},
   pages={129--155},
   issn={0022-4049},
   review={\MR{1117631}},
   doi={10.1016/0022-4049(91)90144-Q},
}

\bib{Bhatt:2020}{article}{
   author={Bhatt, Bhargav},
   title={Cohen-Macaulayness of absolute integral closures},
   date={2020},
   status={preprint},
   eprint={arxiv.org/abs/2008.08070},
}
	
\bib{Bhatt:2016a}{article}{
    author={Bhatt, Bhargav},
   title={On the direct summand conjecture and its derived variant},
   journal={Invent. Math.},
   volume={212},
   date={2018},
   number={2},
   pages={297--317},
   issn={0020-9910},
   review={\MR{3787829}},
   doi={10.1007/s00222-017-0768-7},
}

\bib{Bhatt:2014}{article}{
   author={Bhatt, Bhargav},
   title={On the non-existence of small Cohen-Macaulay algebras},
   journal={J. Algebra},
   volume={411},
   date={2014},
   pages={1--11},
   issn={0021-8693},
   review={\MR{3210917}},
   doi={10.1016/j.jalgebra.2014.03.038},
}

\bib{Bhatt:2012}{article}{
   author={Bhatt, Bhargav},
   title={Derived splinters in positive characteristic},
   journal={Compos. Math.},
   volume={148},
   date={2012},
   number={6},
   pages={1757--1786},
   issn={0010-437X},
   review={\MR{2999303}},
   doi={10.1112/S0010437X12000309},
}

\bib{BlickleSchwedeTucker}{article}{
   author={Blickle, Manuel},
   author={Schwede, Karl},
   author={Tucker, Kevin},
   title={$F$-singularities via alterations},
   journal={Amer. J. Math.},
   volume={137},
   date={2015},
   number={1},
   pages={61--109},
   issn={0002-9327},
   review={\MR{3318087}},
   doi={10.1353/ajm.2015.0000},
}

\bib{Boutot:1987}{article}{
   author={Boutot, Jean-Fran\c{c}ois},
   title={Singularit\'{e}s rationnelles et quotients par les groupes r\'{e}ductifs},
   language={French},
   journal={Invent. Math.},
   volume={88},
   date={1987},
   number={1},
   pages={65--68},
   issn={0020-9910},
   review={\MR{877006}},
   doi={10.1007/BF01405091},
}

\bib{Dutta:1994a}{article}{
   author={Dutta, S. P.},
   title={Dualizing complex and the canonical element conjecture},
   journal={J. London Math. Soc. (2)},
   volume={50},
   date={1994},
   number={3},
   pages={477--487},
   issn={0024-6107},
   review={\MR{1299452}},
   doi={10.1112/S0024610797005292},
}

\bib{Dutta:1997a}{article}{
   author={Dutta, S. P.},
   title={Dualizing complex and the canonical element conjecture. II},
   journal={J. London Math. Soc. (2)},
   volume={56},
   date={1997},
   number={1},
   pages={49--63},
   issn={0024-6107},
   review={\MR{1462825}},
   doi={10.1112/S0024610797005292},
}

\bib{Dwyer/Greenlees:2002}{article}{
   author={Dwyer, W. G.},
   author={Greenlees, J. P. C.},
   title={Complete modules and torsion modules},
   journal={Amer. J. Math.},
   volume={124},
   date={2002},
   number={1},
   pages={199--220},
   issn={0002-9327},
   review={\MR{1879003}},
}

\bib{Foxby:1977}{article}{
   author={Foxby, Hans-Bj\o rn},
   title={Isomorphisms between complexes with applications to the
   homological theory of modules},
   journal={Math. Scand.},
   volume={40},
   date={1977},
   number={1},
   pages={5--19},
   issn={0025-5521},
   review={\MR{447269}},
   doi={10.7146/math.scand.a-11671},
}

\bib{Foxby/Iyengar:2001}{article}{
   author={Foxby, Hans-Bj\o rn},
   author={Iyengar, Srikanth},
   title={Depth and amplitude for unbounded complexes},
   conference={
      title={Commutative algebra},
      address={Grenoble/Lyon},
      date={2001},
   },
   book={
      series={Contemp. Math.},
      volume={331},
      publisher={Amer. Math. Soc., Providence, RI},
   },
   date={2003},
   pages={119--137},
   review={\MR{2013162}},
   doi={10.1090/conm/331/05906},
}

\bib{Gabber:2018}{article}{
author={Gabber, Ofer},
title={Observations made after the {MSRI} workshop on homological conjectures},
date={2018},
status={unpublished note},
eprint={{https://docs.google.com/viewer?url=\\ https://www.msri.org/workshops/842/schedules/23854/documents/3322/assets/31362}},
}

\bib{Grauert/Riemenschneider:1970}{article}{
   author={Grauert, Hans},
   author={Riemenschneider, Oswald},
   title={Verschwindungss\"{a}tze f\"{u}r analytische Kohomologiegruppen auf
   komplexen R\"{a}umen},
   language={German},
   journal={Invent. Math.},
   volume={11},
   date={1970},
   pages={263--292},
   issn={0020-9910},
   review={\MR{302938}},
   doi={10.1007/BF01403182},
}

\bib{Greenlees/May:1992a}{article}{
author={Greenlees, J. P. C.},
   author={May, J. P.},
   title={Derived functors of $I$-adic completion and local homology},
   journal={J. Algebra},
   volume={149},
   date={1992},
   number={2},
   pages={438--453},
   issn={0021-8693},
   review={\MR{1172439}},
}

\bib{Grothendieck:ega3a}{article}{
   author={Grothendieck, A.},
   title={\'{E}l\'{e}ments de g\'{e}om\'{e}trie alg\'{e}brique. III. \'{E}tude cohomologique des
   faisceaux coh\'{e}rents. I},
   journal={Inst. Hautes \'{E}tudes Sci. Publ. Math.},
   number={11},
   date={1961},
   pages={167},
   issn={0073-8301},
   review={\MR{217085}},
}

\bib{Hartshorne:1966}{book}{
   author={Hartshorne, Robin},
   title={Residues and duality},
   series={Lecture notes of a seminar on the work of A. Grothendieck, given
   at Harvard 1963/64. With an appendix by P. Deligne. Lecture Notes in
   Mathematics, No. 20},
   publisher={Springer-Verlag, Berlin-New York},
   date={1966},
   pages={vii+423},
   review={\MR{0222093}},
}

\bib{Hartshorne/Ogus:1974}{article}{
   author={Hartshorne, Robin},
   author={Ogus, Arthur},
   title={On the factoriality of local rings of small embedding codimension},
   journal={Comm. Algebra},
   volume={1},
   date={1974},
   pages={415--437},
   issn={0092-7872},
   review={\MR{347821}},
   doi={10.1080/00927877408548627},
}

\bib{Hartshorne:1966}{book}{
   author={Hartshorne, Robin},
   title={Residues and duality},
   series={Lecture notes of a seminar on the work of A. Grothendieck, given
   at Harvard 1963/64. With an appendix by P. Deligne. Lecture Notes in
   Mathematics, No. 20},
   publisher={Springer-Verlag, Berlin-New York},
   date={1966},
   pages={vii+423},
   review={\MR{0222093}},
}

\bib{Hochster:1973}{article}{
   author={Hochster, Melvin},
   title={Deep local rings},
   status = {Aarhus University preprint series},
   date={December 1973},
   }

\bib{Hochster:1975}{book}{
   author={Hochster, Melvin},
   title={Topics in the homological theory of modules over commutative
   rings},
   note={Expository lectures from the CBMS Regional Conference held at the
   University of Nebraska, Lincoln, Neb., June 24--28, 1974;
   Conference Board of the Mathematical Sciences Regional Conference Series
   in Mathematics, No. 24},
   publisher={Published for the Conference Board of the Mathematical
   Sciences by the American Mathematical Society, Providence, R.I.},
   date={1975},
   pages={vii+75},
   review={\MR{0371879}},
}

\bib{Hochster:1979}{article}{
   author={Hochster, Melvin},
   title={Big and small Cohen-Macaulay modules},
   conference={
      title={Module theory},
      address={Proc. Special Session, Amer. Math. Soc., Univ. Washington,
      Seattle, Wash.},
      date={1977},
   },
   book={
      series={Lecture Notes in Math.},
      volume={700},
      publisher={Springer, Berlin},
   },
   date={1979},
   pages={119--142},
   review={\MR{550433}},
}

\bib{Hochster:1980}{article}{
   author={Hochster, Melvin},
   title={Cohen-Macaulay rings and modules},
   conference={
      title={Proceedings of the International Congress of Mathematicians
      (Helsinki, 1978)},
   },
   book={
      publisher={Acad. Sci. Fennica, Helsinki},
   },
   date={1980},
   pages={291--298},
   review={\MR{562618}},
}

\bib{Hochster:1983}{article}{
   author={Hochster, Melvin},
   title={Canonical elements in local cohomology modules and the direct
   summand conjecture},
   journal={J. Algebra},
   volume={84},
   date={1983},
   number={2},
   pages={503--553},
   issn={0021-8693},
   review={\MR{723406}},
   doi={10.1016/0021-8693(83)90092-3},
}

\bib{Hochster:1987}{article}{
   author={Hochster, Melvin},
   title={Intersection problems and Cohen-Macaulay modules},
   conference={
      title={Algebraic geometry, Bowdoin, 1985},
      address={Brunswick, Maine},
      date={1985},
   },
   book={
      series={Proc. Sympos. Pure Math.},
      volume={46},
      publisher={Amer. Math. Soc., Providence, RI},
   },
   date={1987},
   pages={491--501},
   review={\MR{927996}},
}

\bib{Hochster/Huneke:1992}{article}{
   author={Hochster, Melvin},
   author={Huneke, Craig},
   title={Infinite integral extensions and big Cohen-Macaulay algebras},
   journal={Ann. of Math. (2)},
   volume={135},
   date={1992},
   number={1},
   pages={53--89},
   issn={0003-486X},
   review={\MR{1147957}},
   doi={10.2307/2946563},
}

\bib{Hochster/Huneke:1995}{article}{
   author={Hochster, Melvin},
   author={Huneke, Craig},
   title={Applications of the existence of big Cohen-Macaulay algebras},
   journal={Adv. Math.},
   volume={113},
   date={1995},
   number={1},
   pages={45--117},
   issn={0001-8708},
   review={\MR{1332808}},
   doi={10.1006/aima.1995.1035},
}

\bib{Hochster:2017}{article}{
   author={Hochster, Melvin},
   title={Homological conjectures and lim Cohen-Macaulay sequences},
   conference={
      title={Homological and computational methods in commutative algebra},
   },
   book={
      series={Springer INdAM Ser.},
      volume={20},
      publisher={Springer, Cham},
   },
   date={2017},
   pages={173--197},
   review={\MR{3751886}},
}

\bib{Huneke/Lyubeznik:2007}{article}{
   author={Huneke, Craig},
   author={Lyubeznik, Gennady},
   title={Absolute integral closure in positive characteristic},
   journal={Adv. Math.},
   volume={210},
   date={2007},
   number={2},
   pages={498--504},
   issn={0001-8708},
   review={\MR{2303230}},
   doi={10.1016/j.aim.2006.07.001},
}

\bib{Iyengar:1999}{article}{
   author={Iyengar, S.},
   title={Depth for complexes, and intersection theorems},
   journal={Math. Z.},
   volume={230},
   date={1999},
   number={3},
   pages={545--567},
   issn={0025-5874},
   review={\MR{1680036}},
   doi={10.1007/PL00004705},
}

\bib{Iversen:1977}{article}{
   author={Iversen, Birger},
   title={Amplitude inequalities for complexes},
   journal={Ann. Sci. \'{E}cole Norm. Sup. (4)},
   volume={10},
   date={1977},
   number={4},
   pages={547--558},
   issn={0012-9593},
   review={\MR{568903}},
}

\bib{KollarShaferavich}{book}{
   author={Koll\'{a}r, J\'{a}nos},
   title={Shafarevich maps and automorphic forms},
   series={M. B. Porter Lectures},
   publisher={Princeton University Press, Princeton, NJ},
   date={1995},
   pages={x+201},
   isbn={0-691-04381-7},
   review={\MR{1341589}},
   doi={10.1515/9781400864195},
}

\bib{Kovacs:2000}{article}{
   author={Kov\'{a}cs, S\'{a}ndor J.},
   title={A characterization of rational singularities},
   journal={Duke Math. J.},
   volume={102},
   date={2000},
   number={2},
   pages={187--191},
   issn={0012-7094},
   review={\MR{1749436}},
   doi={10.1215/S0012-7094-00-10221-9},
}

\bib{Lazarsfeld:2004}{book}{
   author={Lazarsfeld, Robert},
   title={Positivity in algebraic geometry. II},
   series={Ergebnisse der Mathematik und ihrer Grenzgebiete. 3. Folge. A
   Series of Modern Surveys in Mathematics [Results in Mathematics and
   Related Areas. 3rd Series. A Series of Modern Surveys in Mathematics]},
   volume={49},
   note={Positivity for vector bundles, and multiplier ideals},
   publisher={Springer-Verlag, Berlin},
   date={2004},
   pages={xviii+385},
   isbn={3-540-22534-X},
   review={\MR{2095472}},
   doi={10.1007/978-3-642-18808-4},
}

\bib{Lipman:1999}{article}{
   author={Lipman, Joseph},
   title={Lectures on local cohomology and duality},
   conference={
      title={Local cohomology and its applications},
      address={Guanajuato},
      date={1999},
   },
   book={
      series={Lecture Notes in Pure and Appl. Math.},
      volume={226},
      publisher={Dekker, New York},
   },
   date={2002},
   pages={39--89},
   review={\MR{1888195}},
}

\bib{Ma:2018}{article}{
   author={Ma, Linquan},
   title={The vanishing conjecture for maps of Tor and derived splinters},
   journal={J. Eur. Math. Soc. (JEMS)},
   volume={20},
   date={2018},
   number={2},
   pages={315--338},
   issn={1435-9855},
   review={\MR{3760297}},
   doi={10.4171/JEMS/768},
}

\bib{Ma/Schwede:2019}{article}{
   author={Ma, Linquan},
   author={Schwede, Karl},
   title={Recent applications of $p$-adic methods to commutative algebra},
   journal={Notices Amer. Math. Soc.},
   volume={66},
   date={2019},
   number={6},
   pages={820--831},
   issn={0002-9920},
   review={\MR{3929575}},
}

\bib{Ma/Schwede:2018}{article}{
   author={Ma, Linquan},
   author={Schwede, Karl},
   title={Perfectoid multiplier/test ideals in regular rings and bounds on
   symbolic powers},
   journal={Invent. Math.},
   volume={214},
   date={2018},
   number={2},
   pages={913--955},
   issn={0020-9910},
   review={\MR{3867632}},
   doi={10.1007/s00222-018-0813-1},
}

\bib{Ma/Schwede/Tucker/Waldron/Witaszek:2019}{article}{
	author = {Ma, Linquan},
    author = {Schwede, Karl},
    author = {Tucker, Kevin},
    author = {Waldron, Joe},
    author = {Witaszek, Jakub},
	title = {An analog of adjoint ideals and plt singularities in mixed characteristic},
    date={2019},
    status={preprint},
    eprint={arxiv.org/abs/1910.14665},
}

\bib{Ma/Singh/Walther:2020}{article}{
   author={Ma, Linquan},
   author={Singh, Anurag K.},
   author={Walther, Uli},
   title={Koszul and local cohomology, and a question of Dutta},
   journal={Math. Z.},
   volume={298},
   date={2021},
   number={1-2},
   pages={697--711},
   issn={0025-5874},
   review={\MR{4257105}},
   doi={10.1007/s00209-020-02619-0},
}

\bib{Piepmeyer/Walker:2009}{article}{
   author={Piepmeyer, Greg},
   author={Walker, Mark E.},
   title={A new proof of the New Intersection Theorem},
   journal={J. Algebra},
   volume={322},
   date={2009},
   number={9},
   pages={3366--3372},
   issn={0021-8693},
   review={\MR{2567425}},
   doi={10.1016/j.jalgebra.2007.09.015},
}

\bib{Roberts:1980}{book}{
   author={Roberts, Paul},
   title={Homological invariants of modules over commutative rings},
   series={S\'{e}minaire de Math\'{e}matiques Sup\'{e}rieures [Seminar on Higher
   Mathematics]},
   volume={72},
   publisher={Presses de l'Universit\'{e} de Montr\'{e}al, Montreal, Que.},
   date={1980},
   pages={110},
   isbn={2-7606-0499-3},
   review={\MR{569936}},
}

\bib{Roberts:1980a}{article}{
   author={Roberts, Paul},
   title={Cohen-Macaulay complexes and an analytic proof of the new
   intersection conjecture},
   journal={J. Algebra},
   volume={66},
   date={1980},
   number={1},
   pages={220--225},
   issn={0021-8693},
   review={\MR{591254}},
   doi={10.1016/0021-8693(80)90121-0},
}

\bib{Roberts:1987}{article}{
   author={Roberts, Paul},
   title={Le th\'{e}or\`eme d'intersection},
   language={French, with English summary},
   journal={C. R. Acad. Sci. Paris S\'{e}r. I Math.},
   volume={304},
   date={1987},
   number={7},
   pages={177--180},
   issn={0249-6291},
   review={\MR{880574}},
}

\bib{Schoutens:2004}{article}{
   author={Schoutens, Hans},
   title={Canonical big Cohen-Macaulay algebras and rational singularities},
   journal={Illinois J. Math.},
   volume={48},
   date={2004},
   number={1},
   pages={131--150},
   issn={0019-2082},
   review={\MR{2048219}},
}

\bib{Simon:1990}{article}{
   author={Simon, Anne-Marie},
   title={Some homological properties of complete modules},
   journal={Math. Proc. Cambridge Philos. Soc.},
   volume={108},
   date={1990},
   number={2},
   pages={231--246},
   issn={0305-0041},
   review={\MR{1074711}},
   doi={10.1017/S0305004100069103},
}

\bib{Stacks}{article}{
title={Stacks Project},
status={eprint},
eprint={{https://stacks.math.columbia.edu}},
}

\bib{Takagi:2006}{article}{
   author={Takagi, Shunsuke},
   title={Formulas for multiplier ideals on singular varieties},
   journal={Amer. J. Math.},
   volume={128},
   date={2006},
   number={6},
   pages={1345--1362},
   issn={0002-9327},
   review={\MR{2275023}},
}

\bib{Tate:1957}{article}{
   author={Tate, John},
   title={Homology of Noetherian rings and local rings},
   journal={Illinois J. Math.},
   volume={1},
   date={1957},
   pages={14--27},
   issn={0019-2082},
   review={\MR{86072}},
}

\end{biblist}
\end{bibdiv}

\end{document}